\definecolor{my-blue}{rgb}{0.0,0.0,0.6}
\definecolor{my-red}{rgb}{0.5,0.0,0.0}
\definecolor{my-green}{rgb}{0.0,0.5,0.0}
\definecolor{nicos-red}{rgb}{0.75,0.0,0.0}
\definecolor{light-gray}{gray}{0.6}
\definecolor{really-light-gray}{gray}{0.8}
\definecolor{sussexg}{rgb}{0.0,0.5,0.5}
\definecolor{sussexp}{rgb}{0.5,0.0,0.5}
\let\Oldsection\section
\renewcommand{\section}{\FloatBarrier\Oldsection}
\let\Oldsubsection\subsection
\renewcommand{\subsection}{\FloatBarrier\Oldsubsection}
\let\Oldsubsubsection\subsubsection
\renewcommand{\subsubsection}{\FloatBarrier\Oldsubsubsection}
\newcommand{\as}[4]{ L_{#1,#2}^{(#3,#4)} }
\newcommand{\asz}[3]{ L_{#1,#2}^{(#3)} }
 \newcommand{\cgs}[2]{ T_{#1,#2} }
 \newcommand{\is}[4]{ G_{#1,#2}^{(#3,#4)} }
 \newcommand{\isz}[3]{ G_{#1,#2}^{(#3)} }
 \newcommand{\bsz}[2]{ G_{#1,#2}^{(0)} }
 \DeclareMathOperator{\al}{al}
  \DeclareMathOperator{\ind}{ind}
\newcommand{\reg}[2]{R_{#1,#2}}
\newcommand{\regmn}{\reg{m}{n}}
\newcommand{\alreg}[2]{R_{#1,#2}^{(\al)}}
\newcommand{\alregmn}{\alreg{m}{n}}
\newcommand{\indreg}[2]{R_{#1,#2}^{(\ind)}}
\newcommand{\pimax}{\Gamma^{(\alpha,\beta)}_{0,(m,n)}}
\newcommand{\pim}[2]{\Gamma^{(#1,#2)}_{0,(m,n)}}
\newcommand{\pimo}[1]{\Gamma^{(#1)}_{0,(m,n)}}
\newtheorem{theorem}{\sc Theorem}[section]
\newtheorem{lemma}[theorem]{\sc Lemma}
\newtheorem{corollary}[theorem]{\sc Corollary}
\newtheorem{definition}[theorem]{\it Definition}
\numberwithin{equation}{section}
\theoremstyle{remark}
\newtheorem{remark}[theorem]{Remark}
\newcommand{\be}{\begin{equation}}
\newcommand{\ee}{\end{equation}}
\providecommand{\abs}[1]{\vert#1\vert}
\providecommand{\P}[1]{\langle#1\rangle}
\newcommand{\fl}[1]{\left\lfloor{#1}\right\rfloor}
\newcommand{\env}{\omega}
\newcommand{\w}{\eta}
\newcommand{\wx}{\w^{\textrm{x}}}
\newcommand{\wy}{\w^{\textrm{y}}}
 \newcommand{\abc}{\mathcal{A}}
\newcommand{\rb}[1]{\left(#1\right)}
\newcommand{\ab}[1]{\left[#1\right]}
\newcommand{\set}[1]{\left\{#1\right\}}
\def\bE{\mathbb{E}}
\def\bN{\mathbb{N}}
\def\bP{\mathbb{P}}
\def\bR{\mathbb{R}}
\def\bZ{\mathbb{Z}}
\def\mC{\mathcal{C}}
\def\mR{\mathcal{R}}
\def\e{\varepsilon}
\def\om{\omega}
 \def\Z{\bZ}
\def\R{\bR}
\def\N{\bN}
\def\E{\bE}
\def\P{\bP} 
\definecolor{darkgreen}{rgb}{0.0,0.5,0.0}
\definecolor{darkblue}{rgb}{0.0,0.0,0.3}
\definecolor{nicosred}{rgb}{0.65,0.1,0.1}
\definecolor{light-gray}{gray}{0.7}
\begin{document}
\title[Optimality regions and fluctuations near the soft edge]
{Optimality regions and fluctuations for Bernoulli last passage models}


\author[N.~Georgiou]{Nicos Georgiou}
\address{Nicos Georgiou\\ University of Sussex\\ Department of  Mathematics \\ Falmer Campus\\ Brighton BN1 9QH\\ UK.}
\email{n.georgiou@sussex.ac.uk}
\urladdr{http://www.sussex.ac.uk/profiles/329373} 
\thanks{NG was partially supported by the University of Sussex Strategic development Fund (SDF) and by the EPSRC First grant EP/P021409/1: The flat edge in last passage percolation.  JO was partially supported by an ISM-CRM fellowship}

\author[J.~Ortmann]{Janosch Ortmann}
\address{Janosch Ortmann\\ Université du Québec à Montréal \\ Département de management et technologie \\ Canada.}
\email{ortmann.janosch@uqam.ca}

\urladdr{http://crm.umontreal.ca/~ortmann/}

\keywords{Soft edge, edge results, optimality regions, sequence alignment, discrete Hammersley process, longest common subsequence, Bernoulli increasing paths, Tracy-Widom distribution, last passage time, corner growth models, flat edge.}
\subjclass[2000]{60K35}
\date{\today}
\begin{abstract}
	We study the sequence alignment problem and its independent version, the discrete Hammersley process with an exploration penalty. We obtain rigorous upper bounds for the number of optimality regions 
	in both models near the soft edge. 
	At zero penalty the independent model becomes an exactly solvable model and we identify cases for which the law of the last passage time converges to a Tracy-Widom law. 
\end{abstract}

\maketitle

\section{Introduction}
	\label{sec:intro}
	
	\subsection{Directed growth models}
	
	In this article we study a generalisation of two specific models of 
	directed last passage percolation, namely the \textit{longest common subsequence model} concerning the size of the longest common subsequence between words drawn uniformly from a finite alphabet \cite{Chv-Sank}, 
	 and an independent version introduced in \cite{Sepp-97} as an exactly solvable discrete analogue of the Hammersley process \cite{Ham}. 
	 We call the latter the \textit{independent model}.
	 
	 We study these models near  directions for which the corresponding shape function starts developing a flat segment, which is called the \emph{soft edge} of the model. 
	Both models fit in the general framework \cite{GRS13}, namely there is: 
	\begin{enumerate}[(i)]
	\item The random environment $\env\in\bR^{\Z^2}$, whose law we denote by $\bP$. Each marginal $\env_u$ should be viewed as a random weight placed on site $u \in \Z^2$.
	\item 
	A collection $\Pi$ of admissible paths on $\Z^2$. A path $\pi$ from $u$ to $v$ is uniquely identified by an ordered sequence of integer sites, 
	so when necessary we write $\pi = \{ u = u_0, u_1, \ldots, u_\ell = v\}$. A path $\pi$ is admissible
	if and only if its increments $z_k=u_{k}-u_{k-1}$ are contained in a finite set $\mR\subset\Z^2$. 
	For $u,v\in\Z^2$ we denote the set of admissible paths from $u$ to $v$ by $\Pi_{u,v}$. 
	It is a requirement that $\P$ is stationary and ergodic under shifts 
	$T_{z}, z \in \mathcal R$.	
	\item A measurable potential function $V: \bR^{\Z^2} \times \mathcal R^\ell \to \bR$. 
	For the two models under investigation we always have $\ell=1$ and $V$ is a bounded function, thus satisfying the technical assumptions of \cite{GRS13}.
	\end{enumerate}

	The \emph{point-to-point last passage time} from $u$ to $v$ is the random variable $G^{V}$ defined by 
	\be
	G^{V}_{u,v} = \max_{\pi \in \Pi_{u,v}}\Big\{ \sum_{u_k \in \pi } V\rb{T_{u_{k}}\env, z_{k+1}} \Big\}. 
	\ee

	A well studied version of the model is the \emph{corner growth model}, for which 
	$\mathcal{R} = \{ e_1, e_2\}$, the coordinates of $\env$ are i.i.d.~under $\bP$ and 
	the potential $V$ for the corner growth model is defined by 
	\be\label{eq:potcgm}
	V(\om, z) = \om_0, \quad z \in \mathcal R = \{ e_1, e_2\}. 
	\ee
	Whenever we are referring to last passage time under this potential and these admissible steps, we will use $T$ instead of $G^V$. It is expected that under some regularity assumptions on the moments and continuity of $\env_0$,  the asymptotic behaviour of $T$ 
	(e.g.\ fluctuation exponents for $T$ and the maximal path,  distributional limits, etc) is environment-independent. 
	This is suggested by results available for the two much-studied exactly solvable models when $\env_0$ is exponentially or geometrically distributed and further evidenced by the general theory in
	\cite{GRS13,cocycle, geodesics} and the edge results of \cite{Bodineau-Martin,Martin04}, as we discuss later.

	The main models in this article have set of admissible steps  $\mathcal R = \{e_1, e_2, e_1 + e_2\}$
	and the  coordinates of the environment take values in $\set{0,1}$. Our choice of potential is a two-parameter family of bounded functions, indexed by two non-negative parameters $\alpha$ and $\beta$:
	 \be\label{eq:vab}
	 V_{\alpha, \beta}(\env, z)  = \begin{cases}
	 	\env_0-\alpha\rb{1-\env_0} \quad & \text{if }z=e_1+e_2\\
		-\beta & \text{if } z\in\set{e_1,e_2}.
	 \end{cases}
	\ee
	This particular choice of potential is inspired by a problem which appears in computational molecular biology, computer science and algebraic statistics, as we explain at the end of this introduction. 
%
Our strongest results are obtained when $\alpha = \beta = 0$ and the marginals of $\env$ are i.i.d.~Bernoulli random variables on $\set{0,1}$ with parameter $p\in (0,1)$, because we then obtain a solvable model \cite{Priezzev-Schutz}. This will be referred to as the \emph{independent model}, and the passage time from $(0,0)$ to $(m,n)$ is denoted $\is mn\alpha\beta$ when both $\alpha$ and $\beta$ are important. When $\alpha=0$ we further simplify notation by $\isz mn\beta= \is mn0\beta$.  The special case $\alpha=\beta=0$ was studied in \cite{Geo-2010,Bas-Enr-Ger-Gou, Sepp-97}. Asymptotic results as $p$ tends to zero were obtained in \cite{Kiw-Loe-Mat}.
	
	We consider a rectangle of height $n$ and width $m_n= \nicefrac np - x n^a$ for $a\in (0,1)$ and show that the fluctuations of $\isz{m_n}n0$ converge, suitably rescaled, to the Tracy--Widom GUE distribution. The size of the rectangle is not arbitrary. A justification for this option comes by looking at the limiting shape function 
	\[
	g_{pp}(t) = \lim_{n\to \infty} \frac{\isz{\fl{nt}}n0}{n},
	\]
continuous in $t$. When $t > \nicefrac1p$ the function has a \emph{flat edge}: $g_{pp}(t) = 1$.When $p< t < 1/p$,  $g_{pp}(t)$ is strictly concave and when $t < p$, $g_{pp}$ has another flat edge, namely $g_{pp}(t)=t$. Fluctuations of $\isz{\fl{nt}}n0$ are of order $n^{1/3}$ when $t\in \rb{p,\nicefrac1p}$, so by looking at the rectangle $m_n \times n$ we study these fluctuations at the onset of the flat edge, but when macroscopically we converge to the critical point $t =1/p$.

\subsection{Edge results}
There is a coupling of $\bsz{\frac{n}{p} - xn^a}{n}$ with $\cgs{n}{n^{2a-1}}$, which we describe in Section \ref{sec:ORBLIP}.
	This mapping was exploited in \cite{Geo-2010} to obtain the local weak law of large numbers
		\be\label{eq:geoagain}
			\lim_{n\to \infty}\bP\Big\{\Big|\frac{n - \bsz{p^{-1}n - xn^a}{n}}{n^{2a-1}} - \frac{(px)^2}{4(1-p)}\Big| <\e \Big\} = 1
		\ee
		for all $a \in \rb{\nicefrac12,1}$.
		We use the same coupling to obtain a distributional limit for the edge. The coupling classifies results for $\bsz{p^{-1}n - xn^a}{n}$ as ``edge results". The terminology ``edge results'' is motivated by the fact that the last passage time $T$ 
	is studied in a thin rectangle, either with dimensions $ n \times yn$ and letting $y \to 0$ after sending $n \to \infty$ \cite{Martin04}, or with only one macroscopic edge, namely of dimensions  $n \times xn^\gamma$ with $\gamma < 1$.

	 Several results near the edge are universal, in the sense that they do not depend on the particular distribution of the environment. In the sequence we denote the environment for the corner growth model by $\zeta = \{\zeta_u\}_{u \in \Z^2_+}$.
An approximation of i.i.d.\ sums with a Brownian motion \cite{Major-Komlos-Tusnady} was used in \cite{GlynnWhitt91} to obtain the 
	weak law of large numbers, 
	\[
	\frac{\cgs{n}{xn^\gamma} - n \E( \zeta_0)}{\sqrt{\text{Var}(\zeta_0) n^{1+\gamma}}} \Longrightarrow  c \sqrt{x}, \quad (n \to \infty),
	\]
	and simulations lead to the conjecture that $c = 2$. The conjecture was proved in \cite{Sepp-97b} via a coupling with an exclusion process and later in \cite{Baryshnikov} using a random matrix approach. A coupling with the Brownian last passage percolation model \cite{Baryshnikov, OConnellYor01} allow 
	\cite{Bodineau-Martin} to obtain
	\be
	\label{eq:B-M-TW}
	\frac{\cgs{n}{n^\gamma} - n \E( \zeta_0) - \sqrt{\text{Var}(\zeta_0) n^{1+\gamma}}}{n^{\frac{1}{2}-\frac{\gamma}{6}}\sqrt{\text{Var}(\zeta_0)}} \Longrightarrow W, \quad (n \to \infty),
	\ee
	where $W$ is has the \emph{Tracy-Widom GUE distribution} \cite{TracyWidom93}: the limiting distribution of the largest eigenvalue of a GUE random matrix.
	If $\zeta_0$ has exponential moments, \eqref{eq:B-M-TW} holds for all $a \in (0, \nicefrac37)$. 

\subsection{The alignment model} 
	The problem of \emph{sequence alignment} \cite{Nee-Wun, Smi-Wat} can be cast in this framework. Consider two words $\wx=\wx_1\ldots \wx_m$ and $\wy=\wy_1\ldots \wy_n$ formed from a finite alphabet $\abc$. We consider the case where each letter of $\wx$ and $\wy$ is chosen independently and uniformly at random from $\abc$. We are looking for a sequence of elementary operations of minimal cost that transform $\wx$ to $\wy$. These operations are:
	\begin{enumerate}
		\item replace one letter of $\wx$ by another, at a cost $\alpha$
		\item delete a letter  of $\wx$ or insert another letter, each at a cost of $\beta$.
	\end{enumerate}
	Assign a score of 1 for each match and subtract the costs for replacements, deletions and insertions. Each sequence of operations taking $\wx$ to $\wy$ is thus assigned a \emph{score} $\as mn\alpha\beta$, also often called the \emph{objective function}. We will also write $\asz mn\beta$ for $\as mn0\beta$.
	
	 A problem arising in molecular biology \cite{handbook_compbio, pach-sturm,vingron-waterman, Hen-Hen, Xia, Ng-Hen} is to maximise this alignment score. In that context the words $\wx$ and $\wy$ can be DNA strands (with $\abc=\set{A,C,G,T}$), RNA strands ($\abc=\set{A,C,G,U}$) or proteins (with $\abc$ the set of amino acids that make up a protein), and the elementary operations correspond to mutations. A choice of the parameters $\alpha$ and $\beta$ corresponds to a judgement on how frequently each type of mutation occurs. The optimal score for an alignment of $\wx$ with $\wy$ can then be considered a measure of similarity between these words.
	The question also appears in algebraic statistics \cite{PachterSturmfels}: there the objective function is the tropicalisation of a co-ordinate polynomial of a particular hidden Markov model.
	
	The special case $\alpha=\beta=0$ corresponds to the problem of finding longest common subsequence (LCS) of the words $\wx$ and $\wy$, which has been intensively studied by computer scientists \cite{Hirschberg75, Maier78, Ber-Hak-Rai,Mas-Pat} and mathematicians \cite{Chv-Sank, Lem-Mat-Vol,Lem-Mat,Hou-Mat-16,Gon-Hou-Lem,Ams-Mat-Vac}.
	
	On the other hand, the alignment score $\as mn\alpha\beta$ is the last passage time \eqref{eq:vab} in environment 
	\begin{align}
		\env_{ij} & = \begin{cases}
			1 \quad&\text{if } \wx_i=\wy_j\\
			0 & \text{otherwise}, \label{eq:alenv}
		\end{cases}
	\end{align}
	i.e. the marginals of $\env$ are (correlated) Bernoulli random variables with parameter ${\abs\abc}^{-1}$. The model with this choice of environment is referred to as the \emph{alignment model}.
		
\begin{figure}	\label{fig:al1}
		\begin{center}
		\begin{tikzpicture}[>=latex,scale=0.7]
			\draw[<->](6.5,0)--(0,0)--(0, 6.5); 
			\draw[line width=3pt, color=sussexg](0,0)--(1,0)--(2,1)--(3,2)--(4,3)--(4,4)--(6.3,6.3);
			\foreach \x in {1,2,...,6}
							{
							\draw(\x,0)--(\x,6.3);
							\draw(0,\x)--(6.3, \x);
							\fill[color=white](\x,0)circle(1.3mm);
							\draw(\x,0)circle(1.3mm);
							\fill[color=white](0,\x)circle(1.3mm);
							\draw(0,\x)circle(1.3mm);
							}				
			\foreach \x in {1,2,...,6}
						{
						\foreach \y in {1,2,...,6}	
							{
							\fill[color=white](\x,\y)circle(1.3mm);
							\draw(\x,\y)circle(1.3mm);
							}
						}
			\foreach \x in {1,2, 4,6}
						{
						\foreach \y in {1,3,4,6}	
							{
							\fill[color=nicos-red](\x,\y)circle(1.3mm);
							\draw(\x,\y)circle(1.3mm);
							\draw(\x,-0.5)node{$A$};
							\draw(-0.5,\y)node{$A$};
							}
						}
			\foreach \x in {3,5}
						{
						\foreach \y in {2,5}	
							{
							\fill[color=nicos-red](\x,\y)circle(1.3mm);
							\draw(\x,\y)circle(1.3mm);
							\draw(\x,-0.5)node{$B$};
							\draw(-0.5,\y)node{$B$};
							}
						}	
			\draw[line width=2pt, color=nicos-red](4,4)circle(4mm);							\end{tikzpicture}
		\end{center}
	\caption{Environment generated by the two strings $AABABA$ and $ABAABA$. Colored dots correspond to the value 1, white dots to the value 0. The thickset path is a maximal path in this environment, from $(0,0)$ to $(6,6)$ with minimal number of vertical or horizontal steps (just 2 in this case). 
	When $\alpha=0$, the illustrated path has score $5 - 2 \beta$ since the environment only contributes to the weights if collected by a diagonal step. The score coincides with the last passage time for $\beta \le 1/2$. For $\alpha=0$ and $\beta > 1/2$ the main diagonal is optimal, with score equal to $4$. These are the only two optimal paths, so there are two optimality regions.}
\end{figure}
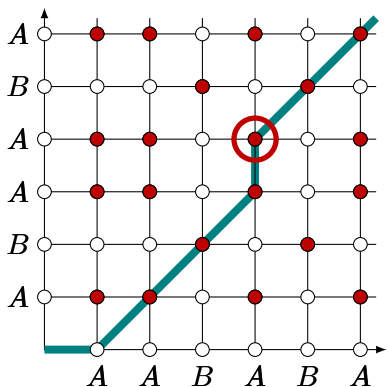
	
A deletion of a character in $\wx$ corresponds to a horizontal step ($e_1$) in the last passage model, whereas an insertion of a letter into $\wx$ corresponds to a vertical step ($e_2$). Replacing a letter in $\wx$ by another corresponds to a diagonal step ($e_1+e_2$) onto a point $(i,j)$ where $\env_{ij}=0$, whereas any letter left alone (i.e. a successful alignment) corresponds to a diagonal step onto a point $(i,j)$ where $\env_{ij}=1$. The path in Figure \ref{fig:al1} corresponds to the alignment
	\begin{align*}
		&\wx: A\, A\, B\,A\, - \!B \, A \\
		&\wy: - \, A \, B\, A\, A\, B\, A 
	\end{align*}
	in which the bar under the first $A$ of $\wx$ corresponds to deleting the letter $A$ from $\wx$ while the bar in $\wx$ corresponds to inserting the letter $A$ there. 
	A convenient way to look at this is that the bars, called \emph{gaps}, are used to stretch the two words appropriately so that different matchings are obtained.	
	
\subsection{Optimality regions.} 
Which paths are optimal depends on the choice of parameters $\alpha,\beta$. In molecular biology these parameters are often chosen ad hoc and it is not clear that there is a single `right' choice \cite{vingron-waterman}. An alternative approach is to consider the space $\mC=[0,\infty)\times[0,\infty)$ of all possible parameters $(\alpha,\beta)$ and to analyse how the optimal paths change as $(\alpha,\beta)$ varies. A maximal subset of $\mC$ on which the set of optimal paths does not change is called an \emph{optimality region} of $\mC$. The shape of optimality regions in $\mC$ are semi-infinite cones bounded by the coordinate axes and by lines of the form $\beta = c+ \alpha(c + 1/2)$ for certain values of $c$. So it suffices to study the number of regions with one parameter fixed; we will set $\alpha=0$. 
	
	Denote the number of optimality regions in this model by $\alregmn$. Naturally the (expected) number of optimality regions attracted a lot of interest both theoretically \cite{Fer-Ven,Gus-Bala-Naor, Cynthia} and in applications \cite{MyersMiller88, Dew-Hug-Woo-Stu-Pac, Mal-Eri-Hug, How-Hei}. The current conjecture \cite{Fern-Sep-Slut, PachterSturmfels} is that $\E(\alreg nn) = O(\sqrt{n})$, but the complexity of the random variable does not allow for direct calculations. In this article we obtain an asymptotic lower bound for the optimal score when $a$ is fixed, as well as upper bounds for the number of optimality regions when the rectangle is of dimensions $m_n \times n$. With random words of this size the biological applications are unrealistic but the results offer some insight from a theoretical perspective. Moreover, we prove that $O(\sqrt{n})$ for the expectation is not the correct order in this case, at least for $a < 3/4$. 
	
Optimality regions can be studied in the independent model as well, and in fact we can obtain stronger results, again when the rectangle is of dimensions $m_n \times n$.

	\subsection{Outline} The paper is organised as follows: in Section \ref{sec:results} we state our main results. Section \ref{sec:ORmi} contains preliminary results that do not depend on the specific choice of environment and therefore hold for both the alignment and the independent model. The results concerning the independent model are proved in Section \ref{sec:ORBLIP} whereas in Section \ref{sec:ORalignment} we prove our results about the alignment model. 
		
	\subsection{Notation} We briefly collect the pieces of notation discussed so far and list the most common notation used in the paper. Letters $T$, $G$ and $L$ all denote last passage times: $T$ is for passage times under potential \eqref{eq:potcgm}, $G$ is the passage time for the independent model and $L$ its counterpart for the alignment model. 
The letter $R$ is reserved for the number of optimality regions, and we distinguish the regions in each of the two models by $\indreg{m}{n}$ the regions in the independent model, and by $\alreg{m}{n}$ the regions in the alignment model. We omit the superscripts when results hold for both models (see for example Section \ref{sec:ORmi}). 

Throughout, $p$ is a parameter in the interval $(0,1)$ and $q = 1-p$. $\mathcal A$ is the alphabet in the alignment model and $\abs\abc$ is its size.

\section{Results}		
\label{sec:results}
	In this section we have our main results, first for the independent model and then the softer ones for the alignment model. 
	
	\subsection{Independent model}
	See Section \ref{sec:ORBLIP} for a proof of Theorems \ref{thm:tightness},  \ref{thm:iid:regions:ub}, \ref{thm:TW} and Corollary \ref{cor:ORhalf}.
	
 We consider the last passage time $\bsz {m_n}n$ with $m_n=n/p-xn^a$ for suitably chosen $x$.  When the exponent $a$ is small we obtain tightness without rescaling, for any choice of $x$: 
	
	\begin{theorem}
		\label{thm:tightness}
		Let $x\in\R$ and $a\in \rb{0,\frac12}$. The sequence $\rb{n-\bsz{n/p-xn^a}{n}}_{n\in\N}$ is tight and
		\begin{align}
			\varlimsup_{n\to\infty} \P\set{n-\bsz{n/p-xn^a}{n}\geq k}\leq \begin{cases} 
												2^{-k}, &\quad a < 1/2\\
												\left(\Phi\left( xpq^{-\nicefrac12} \right)\right)^k, &\quad a = 1/2. 
											   \end{cases}
		\end{align}
		where $\Phi$ is the cumulative distribution function of the standard Gaussian distribution.
	\end{theorem}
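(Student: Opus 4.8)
The plan is to settle the base level $k=1$ by an explicit computation, and for the higher levels to pass everything through the coupling of Section~\ref{sec:ORBLIP} --- the one behind \eqref{eq:geoagain} --- which in the regime $a\le 1/2$ turns $n-\bsz{m_n}{n}$ into a tight functional whose law can be identified. For $k=1$ I would use the ``leftmost match'' construction: scanning the rows from the bottom and always taking the first $1$ strictly to the right of the previous match, one obtains an increasing path of $1$'s through all $n$ rows precisely when this construction never falls off the right edge, and since its running column is nondecreasing in the row index this is the single inequality $\sum_{j=0}^{n-1}g_j\le m_n-n$, where $g_0,\dots,g_{n-1}$ are i.i.d.\ $\mathrm{Geom}(p)$, namely the numbers of $0$'s skipped in the successive rows. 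As this construction is optimal, $\P\{n-\bsz{m_n}{n}=0\}=\P\{\sum_{j=0}^{n-1}g_j\le m_n-n\}$, and with $\E g_0=q/p$, $\Var g_0=q/p^2$ and $m_n-n=nq/p-xn^a$ the central limit theorem gives this probability the limit $\Phi(-xpq^{-1/2})$ when $a=1/2$ and $\Phi(0)=\tfrac12$ when $a<1/2$; equivalently $\P\{n-\bsz{m_n}{n}\ge1\}$ converges to $\Phi(xpq^{-1/2})$, respectively $\tfrac12$, which is the case $k=1$.

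For general $k$ the event $\{n-\bsz{m_n}{n}\ge k\}$ says that no increasing chain of $1$'s meets more than $n-k$ of the rows; equivalently the ``leftmost'' construction, even when permitted to skip $k-1$ rows, still overshoots the right edge. The point where that construction --- with or without skips --- needs extra width is where the coupling of Section~\ref{sec:ORBLIP} applies, recasting the overshoot as a corner growth last passage time over a thin rectangle whose short side has length of order $n^{2a-1}$. When $a<1/2$ that short side no longer grows (it drops below $1$), the passage time reduces to a bounded functional of the environment, and a crude domination estimate yields $\limsup_n\P\{n-\bsz{m_n}{n}\ge k\}\le r_1^{\,k}$ with $r_1\le\tfrac12$, which is the stated $2^{-k}$ (for a range of $x$ one can in fact get $o(1)$). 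When $a=1/2$ the short side is of constant order, so one faces a genuinely nondegenerate but finite-width corner growth strip; there I would feed the exact law of $\bsz{m_n}{n}$ coming from the solvability of the model \cite{Priezzev-Schutz,Geo-2010} into a local central limit theorem to obtain the distributional limit of $n-\bsz{m_n}{n}$ and read off its tail $(\Phi(xpq^{-1/2}))^k$. The Gaussian base is forced by the $k=1$ computation; the $k$-th power reflects a renewal/Markov structure in the strip, in which each additional unit of deficit needs, asymptotically independently, a new ``failure to repair'' of probability converging to $\Phi(xpq^{-1/2})$.

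The real obstacle is this last point: obtaining the true geometric decay $r^k$ rather than merely a tail bounded by $r$ with $o(1)$ corrections. The naive bound $n-\bsz{m_n}{n}\le\max\bigl(0,\sum_{j=0}^{n-1}g_j-(m_n-n)\bigr)$, the overshoot of the unconstrained leftmost construction, is far too lossy for $k\ge2$: that overshoot is of order $\sqrt n$ on the event that it is positive, while the true deficit is $O(1)$, because dropping a well-chosen row exposes fresh columns in the next row and sets off a cascade of savings. Controlling these environment correlations uniformly in $n$ and $k$ is exactly what the solvability --- equivalently, the coupling to a well-understood geometric-weight corner growth model --- is for, and one must also check that the shrinking rectangle at $a<1/2$ and the constant-width strip at $a=1/2$ are handled by estimates uniform in $n$. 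A minor remaining point is the passage from distributional limits to an actual $\limsup$ bound for the integer-valued, un-rescaled sequence, which is standard once tightness (itself a by-product of the tail estimate) is established.
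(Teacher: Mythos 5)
Your $k=1$ computation is correct and is in fact a nice elementary alternative to what the paper does at that level: the greedy (leftmost-match) argument gives $\P\{n-\bsz{m_n}{n}=0\}=\P\{\sum_{j=1}^n g_j\le m_n-n\}$ with i.i.d.\ geometric $g_j$, and the CLT then yields the limits $\tfrac12$ (for $a<\tfrac12$) and $\Phi(-xpq^{-1/2})$ (for $a=\tfrac12$), matching the theorem at $k=1$. The problem is everything beyond $k=1$, which is the actual content of the theorem, and there your proposal has a genuine gap that you yourself flag (``the real obstacle is this last point''). The mechanism that produces the $k$-th power in the paper is concrete and you never identify it: in the identity \eqref{eq:disteq} one chooses $N=\tfrac{q}{p}n-xn^a+k$, so that $n-m_n+N=k$ and the corner-growth rectangle is of width exactly $k$ (not of order $n^{2a-1}$ as you write --- that scaling belongs to the choice of $N$ behind \eqref{eq:geoagain} and Theorem \ref{thm:TW}, and for $a<\tfrac12$ it is meaningless as a side length, which is a sign the parametrization is off). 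Then one bounds $T_{k,N}\ge\max_{1\le j\le k}\sum_{i=1}^N\zeta_{i,j}$, since each full column is contained in an admissible corner-growth path; the $k$ column sums are independent, so $\P\{T_{k,N}\le t\}\le\P\{\sum_{i=1}^N\zeta_{i,1}\le t\}^k$, and a single application of the CLT to one column gives the stated bounds $2^{-k}$ and $(\Phi(xpq^{-1/2}))^k$ uniformly in the way the theorem requires.

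Your substitute for this step --- feeding the exact law into a local CLT and invoking a ``renewal/Markov structure'' giving asymptotically independent failures of probability $\Phi(xpq^{-1/2})$ --- is not carried out, and as an exact statement it is false: for fixed $k$ the rescaled $T_{k,N}$ converges (Glynn--Whitt/Baryshnikov) to a largest-eigenvalue-type functional of a $k\times k$ GUE, so the true limit of $\P\{n-\bsz{m_n}{n}\ge k\}$ is the distribution function of that functional, which is strictly smaller than $\Phi(xpq^{-1/2})^k$; the theorem only asserts the upper bound, and that bound comes precisely from the independent-columns comparison above, not from any asymptotic independence of ``repairs.'' Relatedly, your parenthetical that for $a<\tfrac12$ one can get $o(1)$ for a range of $x$ is incorrect: after scaling, the $xn^a$ term vanishes for every fixed $x$, and the limit is a positive constant independent of $x$. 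So the proposal establishes the base case but not the geometric decay in $k$; to close it you would need exactly the width-$k$ coupling and the column-maximum bound (or an equivalent device that factorizes the probability), which is the paper's proof.
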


	We will see in \eqref{eq:RnGbound} that $\indreg{m}{n}<n-\bsz mn$.  As a corollary we obtain an asymptotic bound on the expected number of optimality regions:
	
	\begin{corollary}
		\label{cor:ORhalf} Let $a\in\left(0,\nicefrac12\right]$. Then
		\begin{align}
			\varlimsup_{n\to\infty} \E \ab{\indreg{n/p-xn^a}n}\leq \begin{cases} 
												2, &\quad a < 1/2\\
												(1- \Phi\left( xpq^{-\nicefrac12} \right))^{-1}, &\quad a = 1/2. 
											   \end{cases}.
		\end{align}
	\end{corollary}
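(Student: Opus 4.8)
The strategy is to deduce the bound from the tail estimate of Theorem~\ref{thm:tightness} via the inequality \eqref{eq:RnGbound} together with the standard tail-sum formula for the mean of a nonnegative integer-valued variable.

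Write $Y_n:=n-\bsz{m_n}{n}$ with $m_n=n/p-xn^a$. The first step is to note that $Y_n$ is a nonnegative integer: a path from $(0,0)$ to $(m_n,n)$ uses at most $n$ diagonal steps, each carrying a weight in $\set{0,1}$, so $0\le\bsz{m_n}{n}\le n$. By \eqref{eq:RnGbound}, almost surely $\indreg{m_n}{n}\le n-\bsz{m_n}{n}+1=Y_n+1$, which in particular makes $\E[\indreg{m_n}{n}]$ finite, and gives
\begin{equation*}
\E\big[\indreg{m_n}{n}\big]\;\le\;1+\E[Y_n]\;=\;1+\sum_{k\ge1}\P\set{Y_n\ge k}\;=\;\sum_{k\ge0}\P\set{Y_n\ge k},
\end{equation*}
using $\P\set{Y_n\ge0}=1$.

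Next I would let $n\to\infty$ inside the sum. Theorem~\ref{thm:tightness} gives, for each fixed $k\ge1$, that $\varlimsup_{n\to\infty}\P\set{Y_n\ge k}\le r^k$, where $r=\tfrac12$ if $a<\tfrac12$ and $r=\Phi(xpq^{-1/2})$ if $a=\tfrac12$, and $r\in(0,1)$ in either case. If the $\varlimsup$ may be brought inside the series, then
\begin{equation*}
\varlimsup_{n\to\infty}\E\big[\indreg{m_n}{n}\big]\;\le\;1+\sum_{k\ge1}r^k\;=\;\frac{1}{1-r},
\end{equation*}
which equals $2$ when $a<\tfrac12$ and $(1-\Phi(xpq^{-1/2}))^{-1}$ when $a=\tfrac12$; this is the assertion.

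The one genuine point, and the main obstacle, is justifying the interchange of $\varlimsup_n$ with the infinite sum: a term-by-term $\varlimsup$ bound on a series is not automatic. For any fixed truncation level $K$ it is automatic, since $\varlimsup$ is subadditive over finite sums, so $\varlimsup_n\sum_{k=1}^K\P\set{Y_n\ge k}\le\sum_{k=1}^K r^k$; what remains is to control the tail $\sum_{k>K}\P\set{Y_n\ge k}=\E\big[(Y_n-K)^+\big]$ uniformly over large $n$. This requires a uniform geometric tail bound $\sup_{n\ge n_0}\P\set{Y_n\ge k}\le C\rho^k$ for some $\rho<1$ (with $\rho$ any number slightly exceeding $\tfrac12$, respectively $\Phi(xpq^{-1/2})$), which the proof of Theorem~\ref{thm:tightness} in fact delivers: that argument controls $Y_n$ through the coupling with $\cgs{n}{n^{2a-1}}$ and produces the stated estimates not only in the limit but uniformly for $n$ large. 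Granting such domination, $\sum_{k>K}\P\set{Y_n\ge k}\le C\rho^{K+1}/(1-\rho)$, so sending $n\to\infty$ and then $K\to\infty$ closes the argument; equivalently, the uniform domination is precisely the hypothesis of reverse Fatou for series, which licenses the interchange directly. I expect essentially all of the effort to lie in isolating that uniform-in-$n$ tail bound from the proof of Theorem~\ref{thm:tightness}, after which the corollary is immediate.
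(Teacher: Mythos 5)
Your proposal is correct and follows essentially the same route as the paper: bound $\indreg{m_n}{n}$ by $n-\bsz{m_n}{n}$ via \eqref{eq:RnGbound}, use the tail-sum formula, and interchange $\varlimsup$ with the series via a uniform-in-$n$ geometric tail bound plus reverse Fatou. The uniform domination you flag as the key point is exactly what the paper supplies, by fixing a tolerance $\delta$ and applying Berry--Esseen to the bound \eqref{eq:aless} to get $\P\{k\le\indreg{m_n}{n}\}\le(\Phi(\delta)+\delta)^k$ for all large $n$, so your assessment of where the work lies matches the paper's proof.
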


		\begin{figure} \label{fig:sim3}
			\centering
			    \includegraphics[scale=.25]{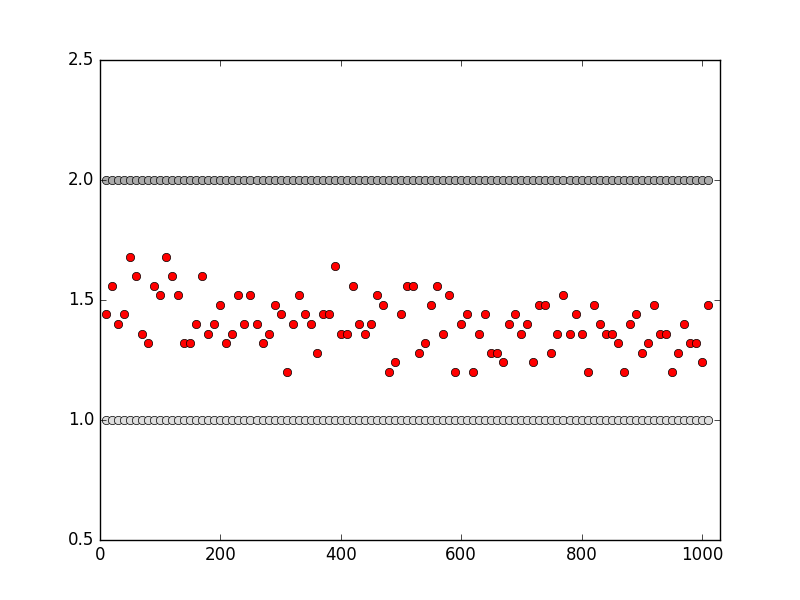} 
			\hspace{-0.5cm}
			    \includegraphics[scale=.25]{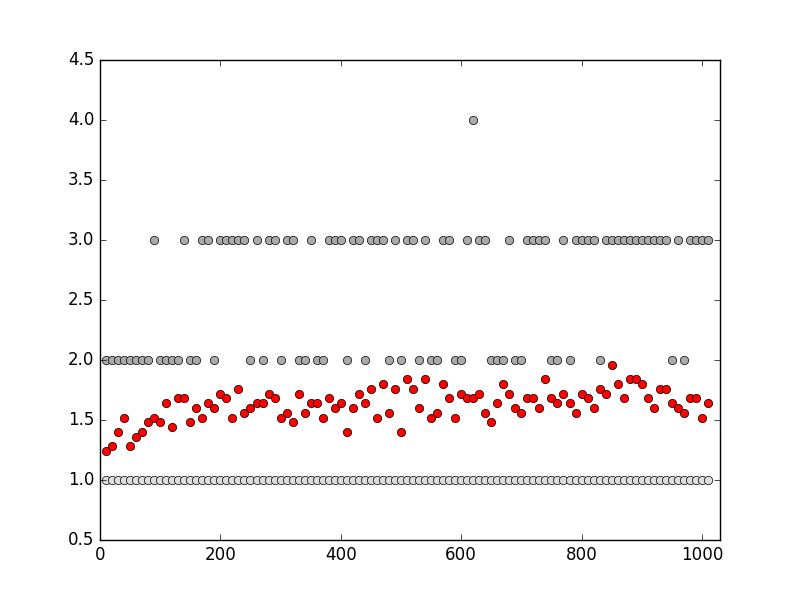}
			\hspace{-0.5cm}
		\includegraphics[scale=.25]{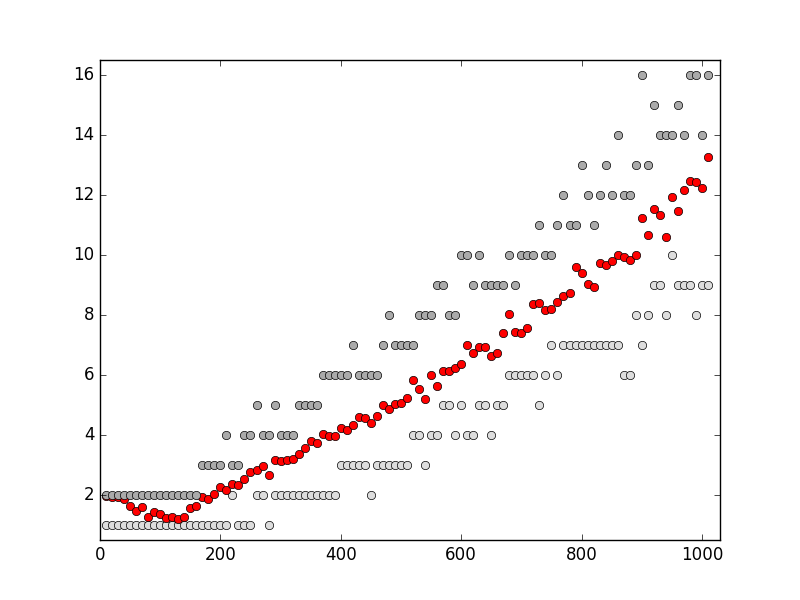}
		\caption{\footnotesize Monte Carlo simulations for the empirical maximum, minimum and expected number of regions for up to $n = 1000$ 
		in the independent model for $a = 0.8$ with varying $p = 0.05, 0.5, 0.8$ from left to right. 
		For each $n$, 25 independent environments were sampled. $n$ grows in increments of size $10$}
		\end{figure}

	For $a > 1/2$ we state a bound on the number $\indreg{m}{n}$ of optimality regions. 
	The optimal results and the relevant scaling of $m$ in terms of $n$ differ according to the value of $a$. 
	
	\begin{theorem}
	\label{thm:iid:regions:ub} 
	Let $a\in \rb{0,1}$.
	\begin{enumerate} 
	\item If $a \in (0, \nicefrac12]$,  
		\begin{align}
			\varlimsup_{n\to\infty} \P\set{\indreg{n/p - xn^a}{n}\geq k}\leq \begin{cases} 
												2^{-k}, &\quad a < 1/2\\
												\left(\Phi\left( xpq^{-\nicefrac12} \right)\right)^k, &\quad a = 1/2. 
											   \end{cases}
		\end{align}
	\item If $a\in(\nicefrac12,\nicefrac34]$ there exists a constant $C_1 = C_1(x, p)$ so that 
		 \be \label{eq:someregions}
			\lim_{n\to \infty} \P\Big\{ \indreg{n/p - xn^a}{n} > C_1n^{2a - 1}\Big\} = 0.
		\ee 
	\item If $a\in\rb{\nicefrac34,1}$ there exists a constant $C_2= C(x, p)$ so that, 
		 \be 
			\lim_{n\to \infty} \P\Big\{ \indreg{n/p - xn^a}n > C_2 n^{2a/3}\Big\} = 0.
		\ee 
	\end{enumerate}
	\end{theorem}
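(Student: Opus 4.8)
The plan is to treat the three ranges of $a$ separately; parts~(1) and~(2) follow quickly from results already in hand, and essentially all the work is in part~(3).

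For part~(1) I would simply combine the deterministic inequality $\indreg{m}{n} < n - \bsz{m}{n}$ recorded in \eqref{eq:RnGbound} with the tail estimate of Theorem~\ref{thm:tightness}, applied with $m = n/p - xn^a$: since $\indreg{n/p-xn^a}{n} \le n - \bsz{n/p-xn^a}{n} - 1$, both cases $a<\tfrac12$ and $a=\tfrac12$ transfer verbatim. Part~(2) is just as short: using $\indreg{m}{n} < n - \bsz{m}{n}$ again, the local law of large numbers \eqref{eq:geoagain} gives, for every $a\in(\tfrac12,1)$ and every constant $C_1 > \tfrac{(px)^2}{4(1-p)}$, that $\bP\{\,n - \bsz{n/p-xn^a}{n} > C_1 n^{2a-1}\,\}\to 0$, whence \eqref{eq:someregions}. (This estimate is in fact valid for all $a\in(\tfrac12,1)$; it is merely superseded by part~(3) once $a>\tfrac34$.)

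For part~(3) the strategy is to pass from counting optimality regions to counting vertices of a convex lattice polygon. I would first record the elementary reduction (presumably underlying \eqref{eq:RnGbound} in Section~\ref{sec:ORmi}): with $\alpha=0$ and $h(k)$ the maximal number of $1$'s that an admissible path from $(0,0)$ to $(m,n)$ can collect using exactly $k$ diagonal steps, one has $\asz{m}{n}{\beta} = \max_k\{h(k) - \beta(m+n-2k)\}$. Hence $\beta\mapsto\asz{m}{n}{\beta}$ is concave and piecewise linear on $[0,\infty)$, its maximal linear pieces are the optimality regions, and their number equals the number of vertices of the upper concave hull of the points $\{(m+n-2k,\,h(k))\}_k$ that are attained for some $\beta\ge 0$. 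Now a path optimal at $\beta=0$ uses some $k^\star$ diagonal steps with $h(k^\star)=\bsz{m}{n}$, and since each diagonal step collects at most one $1$ we have $k^\star\ge\bsz{m}{n}$; on the other hand the optimal number $m+n-2k$ of off-diagonal steps is non-increasing in $\beta$ and reaches its minimum $m-n$ as $\beta\to\infty$. Consequently every relevant hull vertex $(j,h(k))$ has $j=m+n-2k$ in the window $[\,m-n,\ m-n+2(n-\bsz{m}{n})\,]$, of length at most $2(n-\bsz{m}{n})$, while trivially $0\le h(k)\le n$.

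It then remains to bound the number of vertices of a convex lattice polygon contained in a box with sides $W$ and $H$, which by the classical Andrews--Jarn\'ik estimate is $O\big((WH)^{1/3}\big)$. Here $H\le n+1$, and by \eqref{eq:geoagain}, for any $\e>0$ we have $W\le 2\big(n-\bsz{n/p-xn^a}{n}\big)+1\le\big(\tfrac{(px)^2}{2(1-p)}+\e\big)n^{2a-1}$ with probability tending to $1$; on that event the hull has area $O(n^{2a})$ and hence $O\big(n^{2a/3}\big)$ vertices, so a suitable $C_2$ closes part~(3). The convexity input improves on the trivial bound ``number of vertices $\le W+1$'' of part~(2) exactly when $n^{2a/3}<n^{2a-1}$, i.e.\ when $a>\tfrac34$, which is precisely the stated range. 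I expect the main difficulty to be not probabilistic — the only probabilistic ingredient, \eqref{eq:geoagain}, is already available — but in making the concave-hull reduction airtight: matching ``the set of optimal paths changes'' with ``$\beta$ crosses a hull vertex'', handling ties in the Bernoulli environment and a possibly non-unique $\beta=0$ optimum, and absorbing the resulting $O(1)$ discrepancies into the constants $C_1,C_2$.
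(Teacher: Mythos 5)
Your proposal is correct, and parts (1) and (2) follow the paper's own route essentially verbatim: (1) is \eqref{eq:RnGbound} combined with the tail bound established in the proof of Theorem \ref{thm:tightness} (the paper cites its displays \eqref{eq:aless}, \eqref{eq:ais}), and (2) is \eqref{eq:RnGbound} plus the local law of large numbers \eqref{eq:geoagain}, valid for all $a\in(\nicefrac12,1)$ exactly as you remark. Part (3), however, is a genuinely different argument from the paper's. The paper feeds the bound $z_0-z_R+y_0-y_R\le Cn^{a}$ into Lemma \ref{lem:future}, whose proof counts irreducible fractions via Euler's totient function (following \cite{Fern-Sep-Slut}); crucially, the bound $z_0-z_R\le C_1n^{a}$ there comes from the constructed near-optimal path $\pi_S$ of Lemma \ref{lem:pis}, while $y_0-y_R\le Cn^{2a-1}$ comes from \eqref{eq:geoagain}. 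You instead identify the critical penalties with slopes of the upper concave hull of the lattice points $(m+n-2k,\,h(k))$ --- which is exactly the structure implicit in Lemmas \ref{lem:critical} and \ref{lem:naivebound} --- confine the relevant vertices to a window of width $W=O(n^{2a-1})$ (only \eqref{eq:geoagain} is needed) and height $H\le n+1$, and invoke the Andrews-type bound that a convex lattice polygon of area $A$ has $O(A^{1/3})$ vertices, giving $O((WH)^{1/3})=O(n^{2a/3})$. This buys you a proof that dispenses entirely with the path construction of Section \ref{ssec:3.1} and with the Farey/totient counting, at the price of importing the lattice-convexity area bound, which plays precisely the role of Lemma \ref{lem:future} (the two estimates are two faces of the same ``few distinct rational slopes'' phenomenon: the paper uses the perimeter-type quantity $z_0-z_R+y_0-y_R$, you use the area of the bounding box; combining the paper's two bounds with your product form would even give the sharper exponent $a-\nicefrac13$). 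Two small points to fix when writing it up: the map $\beta\mapsto G^{(\beta)}$ is a maximum of linear functions of $\beta$ and hence convex (not concave) --- the relevant concavity is that of the hull in the $(y,h)$-plane, which is what you actually use --- and the $O(1)$ bookkeeping you flag (regions versus hull vertices, MGM ties) is exactly what the paper's Lemma \ref{lm:3.2}, Lemma \ref{lem:critical} and the monotonicity of the optimal gap number take care of, so it is routine but should be said.
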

In the theorem above, equation \eqref{eq:someregions} holds also when $a > 3/4$, however the bound $n^{2a/3}$ is sharper. 

	Finally when $a\in( \nicefrac12,\nicefrac57]$ we obtain Tracy-Widom fluctuations. It is worth noting that we do not take the standard approach of scaling by the variance. Instead, we change the size of the rectangle, by subtracting a term of size $n^{\frac{2 - a}{3}}$ from the width.
	
	%
	
	\begin{theorem} 
	\label{thm:TW}
	For $s\in\R$ define $x = \frac{2}{\sqrt p}\left(\frac{q}{p}\right)^a$ and $y(s)=s\frac{\sqrt{p}}{q}\left(\frac{p}{q}\right)^{\frac{1+a}{3}}$. Then
	\begin{enumerate}
		\item For $1/2 < a < 2/3$,
	\be \label{eq:h1}
	\lim_{n \to \infty} \P \left\{ \bsz{\frac{n}{p} - x n^a - y(s) n^{\frac{2 - a}{3}}}{n} \le  n - \left(\frac{qn}{p}\right)^{2a-1}\right\} =F_{TW}(s).
	\ee
	\item For $2/3 \le a \le 5/7$,
	\be \label{eq:h2}
	\lim_{n \to \infty} \P \left\{ \bsz{\frac{n}{p} - x n^a - y(s) n^{\frac{2 - a}{3}}}{n} \le  n - \left(\frac{qn}{p}\right)^{2a-1} + ax \left(\frac{q}{p}\right)^{2a-2}n^{3a-2}\right\} =F_{TW}(s).
	\ee
	\end{enumerate}
	\end{theorem}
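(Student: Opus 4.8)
The plan is to transport the known Tracy--Widom edge asymptotics for the corner growth model to $\bsz{\cdot}{n}$ through the coupling constructed in Section~\ref{sec:ORBLIP}. Since $\alpha=\beta=0$ the independent model is exactly solvable, and that coupling identifies the passage time $\bsz{n/p-un^a}{n}$ --- equivalently the defect $n-\bsz{n/p-un^a}{n}$ --- with a last passage time $\cgs{m}{k}$ of an exactly solvable corner growth model, up to an explicit deterministic affine change, over a rectangle $(m,k)=\Lambda_n(u)$ whose dimensions are an explicit function of the width $n/p-un^a$ and of $p$. Under this correspondence the defect is of order $n^{2a-1}$, as already recorded in \eqref{eq:geoagain}, and its fluctuations are Tracy--Widom at the scale dictated by \eqref{eq:B-M-TW}. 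The regime of validity of \eqref{eq:B-M-TW} for the corner growth model at the relevant aspect ratio is exactly what forces $2a-1\le 3/7$, i.e.\ $a\le 5/7$; the lower bound $a>1/2$ guarantees $2a-1>0$, so the rectangle is genuinely in the thin, soft-edge regime and the defect is random to leading order.

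Concretely, I would first apply the coupling with $u=u_n(s)$ chosen so that the width equals $\tfrac np-xn^a-y(s)n^{(2-a)/3}$, rewriting the event in the statement as an event for the corner growth passage time over $\Lambda_n(s):=\Lambda_n(u_n(s))$. The value $x=\tfrac{2}{\sqrt p}(q/p)^a$ is dictated by \eqref{eq:geoagain}: it is precisely the one making the leading term of the defect at $u=x$ equal to $(qn/p)^{2a-1}$, so that $n-(qn/p)^{2a-1}$ is the correct first-order centering. I would then Taylor-expand the map $u\mapsto\Lambda_n(u)$ (equivalently the mean of the defect) about $u=x$. The term linear in the perturbation $y(s)n^{(2-a)/3}$ should shift $\Lambda_n(s)$ away from $\Lambda_n(0)$ by exactly the order of the Tracy--Widom fluctuation scale of the corner growth passage time, and the calibration $y(s)=s\tfrac{\sqrt p}{q}(p/q)^{(1+a)/3}$ is the one turning that shift into precisely $s$ standard fluctuation units. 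This is why re-parametrising through the rectangle is convenient rather than normalising by the variance: the coupling is a clean statement about fixed rectangles, so encoding $s$ as a deformation of the width lets one invoke \eqref{eq:B-M-TW} directly, without computing the centering of $\cgs{m}{k}$ to the precision a variance normalisation would demand.

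It then remains to collect the remaining terms of the expansion and verify that all but one are of order strictly smaller than the fluctuation scale throughout $1/2<a\le 5/7$ (the quadratic-in-$y$ term, the contribution of the short side of $\Lambda_n(s)$ visible in \eqref{eq:B-M-TW}, rounding from the floor functions, and so on). The single term that does not always vanish at this scale is the second-order term of the defect's mean, of order $n^{3a-2}$: for $a<2/3$ it is $o(1)$ and is absorbed into the error, which is part~(1); for $a\ge 2/3$ it is unbounded in $n$ and must be carried in the centering, producing the correction $ax(q/p)^{2a-2}n^{3a-2}$ of part~(2) --- which is also why the two statements split precisely at $a=2/3$. Writing $\theta_n$ for the threshold $n-(qn/p)^{2a-1}$ (plus this correction when $a\ge 2/3$), under the coupling the event $\{\bsz{\cdot}{n}\le\theta_n\}$ becomes an event $\{\cgs{m_n}{k_n}\le\theta_n'\}$ with $(m_n,k_n)=\Lambda_n(s)$ and the inequality in the direction consistent with $F_{TW}$ (recall $y$ is increasing in $s$); feeding $\Lambda_n(s)$ into \eqref{eq:B-M-TW} one checks that $\theta_n'$, recentred and rescaled as there, converges to $s$, hence the probability converges to $F_{TW}(s)$.

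The bulk of the work, and the main obstacle, is precisely this asymptotic bookkeeping: one must expand both sides of the coupling well past the leading $n^{2a-1}$ term, through the $n^{3a-2}$ correction and down to the fluctuation scale, and show that every discarded quantity is negligible uniformly over $1/2<a\le 5/7$ --- including at the two delicate endpoints $a=2/3$, where the correction term switches from irrelevant to essential, and $a=5/7$, where the aspect ratio reaches the boundary of validity of \eqref{eq:B-M-TW}. A further point requiring care is that the coupling of Section~\ref{sec:ORBLIP} is formulated for a fixed rectangle, whereas here $\Lambda_n(s)$ depends on both $n$ and $s$; one must check that the identification and its accompanying error estimates hold uniformly as the width is deformed by the $y(s)n^{(2-a)/3}$ term.
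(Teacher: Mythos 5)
Your proposal follows essentially the same route as the paper: it rewrites the event via the distributional identity \eqref{eq:disteq} as a thin-rectangle corner-growth event with short side of order $n^{2a-1}$, calibrates $x$ and $y(s)$ so that the Bodineau--Martin limit \eqref{eq:B-M-TW} yields threshold $s$, and attributes the split at $a=2/3$ to the $n^{3a-2}$ second-order term and the restriction $a\le \nicefrac57$ to the validity range $\gamma=2a-1\le\nicefrac37$. The only part not carried out is the quantitative inversion of the relation between $n$ and the corner-growth dimensions (the paper's expansion of $n(N)$ with error $o(N^{(2-a)/3})$ and the floor/$k_N$ corrections), which you correctly identify as the remaining bookkeeping.
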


\begin{figure}[h]
			\begin{center}
				\begin{tikzpicture}[>=latex, yscale=0.65]
					\draw[<->] (0,4)--(0,0)--(12,0);
					\draw[fill, color=sussexg!60](6,0)rectangle(9,3.5);
					\draw[densely dotted, line width=1.6pt](6,0)rectangle(9,3.5);
					\draw(0,0)rectangle (11.5,3.5);
					\draw (-0.4, 3.5) node{$n$};
					\draw (11.5, -0.4) node{$\nicefrac np$};
					\draw (7.5, -0.4) node{$\nicefrac np-xn^a$};
					\draw[line width=2pt](7.5,0)--(7.5, 3.5); 
					\draw[|<-](6,4)--(6.5,4);
					\draw[->|](8.5,4)--(9,4);
					\draw(7.5, 4)node{$O(n^{\frac{2-a}{3}})$}; 
					\draw[color=nicosred,line width=2.6pt](6,3.5)--(9, 3.5);
				\end{tikzpicture}
			\end{center}
			\caption{\footnotesize Tracy-Widom fluctuations to the last passage time of the independent model 
			depend on position of the endpoint in the thickset red line. 
			When $a \in (\nicefrac12, \nicefrac 23)$ the Tracy-Widom reveals itself just by centering 
			according to the first and second order macroscopic approximation
			of the LLN for $G$. 
			However when $a \in (\nicefrac32, \nicefrac 57)$, a third order approximation to the law of large numbers, $c n^{3a-2}$, is necessary 
			for the Tracy-Widom fluctuations.}
		\end{figure}
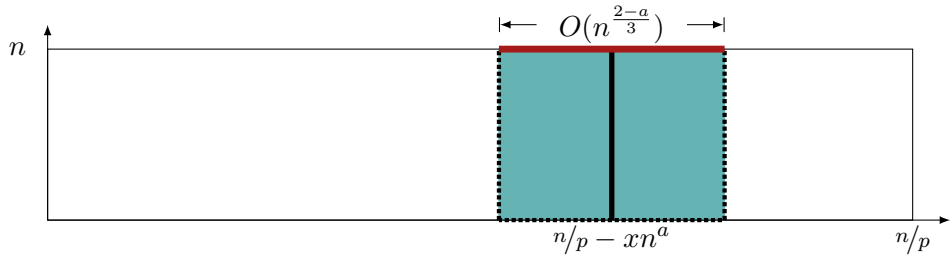

	\begin{remark}
		The case $a \ge \nicefrac57$ corresponds to an exponent $\gamma =2a-1 \ge \nicefrac37$ in equation \eqref{eq:B-M-TW} (see \cite{Bodineau-Martin})
		 and the result cannot be extended further with these techniques. In Section 3.1 of \cite{Bodineau-Martin} the authors explain why their result should 
		extend at least up to exponent $\gamma = \nicefrac34$. 
		The independent Bernoulli model here, while equivalent to the edge of the corner growth model  may be a bit more sensitive to these cut-offs and indeed $\gamma = \nicefrac37$ seems to be critical and manifests itself in the proof. 
		
		From the two cases of Theorem 
		\ref{thm:TW} we see that we need to amend the right-hand side of the event in  \eqref{eq:h1} by a term $O(n^{3a-2})$, in order to get the non-trivial result in \eqref{eq:h2}. This gives a new cut-off $a =\nicefrac23$ 
		(or $\gamma = \nicefrac13$). The term is there for case 2 as well, but when $a \le \nicefrac23$ the term is bounded and plays no role, while it must be dealt with, for higher $a$.
		
		Second, from the proof of Theorem \ref{thm:TW}, the exponent $a = \nicefrac57$ ($\gamma = \nicefrac37$) seems 
		to be critical, since it is necessary to have $2a-1 < \frac{2-a}{3}$ to balance the various orders of magnitude that appear.
		Assuming that the scaling in  $\eqref{eq:B-M-TW}$ remains the same for 
		$\gamma \in (\nicefrac37, \nicefrac34)$, this change implies a corresponding correction term of size $O(n^{\gamma})$ at the numerator of  \eqref{eq:B-M-TW}. \qed
		\end{remark}

	\subsection{Alignment model}
	Throughout we fix a finite alphabet $\abc$ with $\abs\abc\geq 2$, from which the letters of words $\wx$ and $\wy$ are chosen uniformly at random, independently of each other and let $a\in(0,1)$ and $\alpha,\beta\geq 0$. The proofs of Theorems \ref{thm:LPalignment}, \ref{thm:ORprobregions} and \ref{thm:ORexpregions} can be found in Section \ref{sec:ORalignment}. 
	
Define 
\[
g^{(a)}(n) = 
 \begin{cases}
 \sqrt{n \log n}, \quad a \le 1/2,\\
 n^a, \quad a > 1/2. 
 \end{cases}
\]
	\begin{theorem} \label{thm:LPalignment} Let $x > 0$ and $\beta \ge 0$.  For $\P$-a.e. $\omega$ we have the upper bound 
 \[
 \varlimsup_{n\to \infty} \frac{n ( 1 + \beta - \beta \abs\abc )-  \asz{\fl{n\abs\abc - xn^a}}n\beta}{g^{(a)}(n)} \le 
 \begin{cases} 
 \frac{\sqrt{2}}{\abs\abc -1} - \frac{1}{\abs\abc}, & a \le 1/2,\\
 \frac{1}{\abs\abc(\abs\abc-1)} - \beta x, & a > 1/2.
 \end{cases}
 \]
 and the lower bound
  \[
  \varliminf_{n\to \infty} \frac{n ( 1 + \beta - \beta \abs\abc )-  \asz{\fl{n\abs\abc - xn^a}}n\beta}{g^{(a)}(n)} \ge 
  \begin{cases}
  0, & a \le 1/2,\\
  - \beta x, & a > 1/2.
  \end{cases}
 \]
\end{theorem}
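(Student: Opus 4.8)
The plan is to transfer the statement to a last-passage problem in the corner-growth environment via the coupling with $\cgs{n}{n^{2a-1}}$ mentioned in Section \ref{sec:ORBLIP}, and then use the edge law of large numbers \eqref{eq:geoagain} together with the subadditive-type estimates underlying it. First I would reduce everything to the independent model: since each letter of $\wx$ and $\wy$ is drawn uniformly from $\abc$, the environment \eqref{eq:alenv} has Bernoulli$(\abs\abc^{-1})$ marginals, so with $p = \abs\abc^{-1}$ and $q = 1 - p = (\abs\abc-1)/\abs\abc$ the quantity $n(1 + \beta - \beta\abs\abc)$ is exactly $n - \beta n(\abs\abc - 1) = n - \beta q n / p \cdot p$, i.e.\ $n$ minus the deterministic penalty contribution $\beta q n$ carried by the expected number of off-diagonal steps. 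The correlations in the alignment environment mean the coupling is not exact, but the point-to-point passage time $\asz{m}n\beta$ is sandwiched between passage times one can control: the key structural fact (used already for \eqref{eq:geoagain}) is that a maximal path to $(m_n,n)$ with $m_n = n\abs\abc - xn^a$ uses roughly $xn^a$ non-diagonal steps, each costing $\beta$, and otherwise collects the Bernoulli weights along a near-diagonal strip, which is governed by $\cgs{n}{n^{2a-1}}$.

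Next I would separate the two regimes. For $a > 1/2$, write $\asz{m_n}n\beta = (\text{diagonal contribution}) - \beta\cdot(\text{number of non-diagonal steps})$. The number of non-diagonal steps in the optimal path is $\Theta(n^a)$, and a careful accounting shows it equals $xn^a + o(n^a)$ to leading order (this is where the coupling with $\cgs nn^{2a-1}$ and the concavity of the shape function enter, pinning the path near the diagonal). Subtracting the deterministic $n(1+\beta-\beta\abs\abc)$ then leaves $\beta x n^a$ from the penalty term plus a correction of order $n - (\text{diagonal weight collected})$, which by the coupling is $O(\cgs nn^{2a-1})$; the GlynnWhitt/Sepp\"al\"ainen law of large numbers gives $\cgs nn^{2a-1} = O(n^{a})$ in probability and the large-deviation tail of $\cgs n{\cdot}$ (exponential moments of Bernoulli are trivially finite) upgrades this to an a.s.\ $\varlimsup$ bound. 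The constant $\frac{1}{\abs\abc(\abs\abc-1)}$ should come out as $\lim \cgs nn^{2a-1}/n^{2a-1}\cdot(\text{something})$ combined with the $(px)^2/4q$-type constant from \eqref{eq:geoagain}, after matching $2a-1$ versus $a$; the lower bound $-\beta x$ is just the trivial bound that the penalty cannot help, i.e.\ $\asz{m_n}n\beta \le n - \beta\cdot(\text{min non-diagonal steps}) = n$ minus essentially nothing, giving $\varliminf \ge -\beta x$ from the forced $\ge xn^a(1+o(1))$ non-diagonal steps... more precisely $\asz{m_n}n\beta \ge$ (collect all 1's on a fixed greedy path) which forces at least the deterministic number of penalties. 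For $a \le 1/2$ the penalty term $\beta xn^a$ is of smaller order than $\sqrt{n\log n}$, so it drops out, and the bound is driven purely by fluctuations of the Bernoulli weight collection in a rectangle of width $\Theta(n)$ and the logarithmic factor $g^{(a)}(n) = \sqrt{n\log n}$ arises from a union bound over $O(n)$ independent-ish near-diagonal strips (Borel--Cantelli with Gaussian/Hoeffding tails), explaining the $\sqrt 2$ in $\frac{\sqrt 2}{\abs\abc - 1}$.

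The main obstacle I expect is handling the correlations in the alignment environment \eqref{eq:alenv}: unlike the independent model, the weights $\env_{ij}$ are not independent, so one cannot directly invoke \eqref{eq:geoagain} or the Tracy--Widom machinery. The standard workaround (going back to Arratia--Steele type arguments) is to use the independence of the letters themselves: condition on $\wy$, so that along any fixed path the weights become conditionally independent, or alternatively exploit that on any monotone path no row index and no column index repeats, so the relevant weights on a single path \emph{are} mutually independent. Combined with a union bound over the (super-polynomially many, hence the $\log$) relevant paths or strips, this should yield the a.s.\ $\varlimsup$; the $\varliminf$ direction is softer since it only requires exhibiting one good path. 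The bookkeeping to get the precise constants $\frac{\sqrt 2}{\abs\abc-1} - \frac1{\abs\abc}$ and $\frac{1}{\abs\abc(\abs\abc-1)} - \beta x$ — rather than just the correct order — will be the most delicate part, requiring one to track the interplay between the $xn^a$ shift, the $\cgs nn^{2a-1}$ coupling, and the exact second-order term in the shape function near the soft edge.
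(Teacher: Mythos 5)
There is a genuine gap here. Your plan hinges on transferring the alignment passage time to the corner growth model via the coupling of Section \ref{sec:ORBLIP} and then invoking \eqref{eq:geoagain}, but that coupling (identity \eqref{eq:disteq}, built from DTASEP) and the edge law of large numbers \eqref{eq:geoagain} are facts about the \emph{independent} Bernoulli environment only; for the correlated environment \eqref{eq:alenv} no such distributional identity is available, and saying the alignment time is ``sandwiched between passage times one can control'' is an assertion, not an argument. The same applies to the claim that the optimizer uses $xn^a+o(n^a)$ non-diagonal steps because ``the concavity of the shape function pins the path near the diagonal'': nothing of this sort is established (or needed). The proposed workaround for the correlations --- conditioning on $\wy$ or using independence of weights along a single monotone path plus a union bound over exponentially many paths --- is exactly the hard part and is left undeveloped; a naive union bound with Hoeffding tails over all admissible paths does not obviously produce either the order $g^{(a)}(n)$ or the stated constants. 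You have also swapped the roles of the two inequalities: the theorem's \emph{upper} bound on the deficiency $n(1+\beta-\beta\abs\abc)-\asz{\fl{n\abs\abc-xn^a}}{n}{\beta}$ is a \emph{lower} bound on the passage time and is the direction proved by exhibiting one good path, while the $\varliminf$ bound is an upper bound on the passage time, which here is purely deterministic (any path collects weight at most $n$ and must take at least $\fl{\abs\abc n-xn^a}-n$ gaps, each costing $\beta$), with no union bound or fluctuation analysis required.

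For comparison, the paper's proof avoids the coupling entirely. It constructs an explicit greedy path $\pi_S$ (Section \ref{ssec:3.1}): run diagonally up to the deterministic point $u_n(a)$ of \eqref{eq:DefUn}, then step diagonally on a match and horizontally otherwise. The horizontal run lengths are genuinely i.i.d.\ Geometric$(\nicefrac1{\abs\abc})$ because fresh letters of $\wx$ are examined at every step, so the moderate deviation estimate of Lemma \ref{lem:MD} plus Borel--Cantelli shows (Lemma \ref{lem:pis}) that almost surely, for all large $n$, $\pi_S$ reaches the north boundary with no vertical steps, the minimal number $(\abs\abc-1)n-xn^a$ of gaps, and weight at least $n-u_n(a)\cdot e_2$; plugging its score into $\asz{\fl{n\abs\abc-xn^a}}{n}{\beta}\ge w_{0,\beta}(\pi_S)$ and dividing by $g^{(a)}(n)$ gives the upper bound, the choice $c_1,c_2>\frac{2}{(\abs\abc-1)^2}$ in \eqref{eq:DefUn} being what produces the $\sqrt{n\log n}$ scale and the constant $\frac{\sqrt2}{\abs\abc-1}$. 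If you want to salvage your approach, you would need to either restrict to the independent model (where the coupling is legitimate) or replace the coupling step by an honest construction or concentration argument valid under the correlations of \eqref{eq:alenv}; as written, the key quantitative steps are not justified.
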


Finally, we turn to the number of optimality regions for the alignment model. The first result gives an upper bound on the asymptotic growth of the number regions:
	
	\begin{theorem} Let $x > 0$. There exists a constant $C_1(\abs\abc, x)$ that only depend on $x$ and $\abs\abc$ so that  
		\label{thm:ORprobregions}
		\be
\varlimsup_{n  \to \infty} \frac{\alreg{\fl{\abs\abc n - xn^a}}n}{(g^{(a)}(n))^{\nicefrac23}} \le C_1(x, \abs\abc), \quad \P-a.s.
\ee
The constant tends to $0$ as the alphabet size tends to $\infty$.
\end{theorem}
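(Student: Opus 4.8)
\textbf{Proof proposal for Theorem \ref{thm:ORprobregions}.}

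The plan is to reduce the bound on $\alreg{\fl{\abs\abc n - xn^a}}{n}$ to the already-available control on the alignment score from Theorem \ref{thm:LPalignment}, via the same general mechanism that produces the inequality $\indreg{m}{n} < n - \bsz mn$ in the independent model (referenced just before Corollary \ref{cor:ORhalf}). First I would recall the combinatorial/geometric description of optimality regions from Section \ref{sec:ORmi}: the boundaries between regions in $\mC$ are line segments of the slope-form $\beta = c + \alpha(c+\tfrac12)$, so along the line $\alpha = 0$ the number of regions is governed by how many distinct values the pair (number of matches collected, number of non-diagonal steps) can take among optimal alignments as $\beta$ increases from $0$. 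Each such distinct value corresponds to a vertex of the concave ``score as a function of $\beta$'' profile, and each vertex is associated with an optimal path using some number $k$ of horizontal/vertical (gap) steps. Since increasing $\beta$ only ever decreases the number of gap steps used by the optimal path, the number of regions along $\alpha=0$ is at most the number of gap steps used by the $\beta=0$ optimiser plus one; equivalently $\alreg{m}{n} \le (\text{max gap steps in an LCS-optimal path}) + 1$. This is the content I would extract (or cite) from Section \ref{sec:ORmi}.

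Next I would bound the number of gap steps of an optimal LCS path in the rectangle of dimensions $\fl{\abs\abc n - xn^a}\times n$. In this near-soft-edge regime the $\beta=0$ last passage time $\asz{\fl{\abs\abc n - xn^a}}{n}$ is, by Theorem \ref{thm:LPalignment} with $\beta = 0$, within $O(g^{(a)}(n))$ of the ``full-diagonal-ish'' value $n$ (the $\beta=0$ case of that theorem reads $\varlimsup (n - \asz{\cdot}{\cdot}{0})/g^{(a)}(n) \le \tfrac{\sqrt2}{\abs\abc-1} - \tfrac1{\abs\abc}$ for $a\le 1/2$, and $\tfrac1{\abs\abc(\abs\abc-1)}$ for $a>1/2$). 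The key quantitative link is that a path collecting score $n - O(g^{(a)}(n))$ cannot afford too many gap steps: each gap step forgoes a potential match, and a Chvátal--Sankoff-type upper bound on the LCS of the ``sub-words'' reached after $k$ gaps shows that an optimal path with $k$ gap steps has score at most $n - (\text{something growing in }k)$. Matching this against the lower bound $n - O(g^{(a)}(n))$ on the optimal score forces $k = O(g^{(a)}(n))$. Actually, the exponent $\nicefrac23$ in the theorem signals that the right count is not $k = O(g^{(a)}(n))$ directly but rather $k = O((g^{(a)}(n))^{2/3})$: the mechanism is that $k$ gap steps in a region of ``excess'' $\Delta := n - \text{score}$ must satisfy a convexity relation of the form (roughly) $k \cdot (\text{typical match deficit per gap, which itself grows like }k/\text{width}) \lesssim \Delta$ — I would make this precise using the deterministic estimate on how far the LPP value drops when the path is forced through $k$ off-diagonal steps, combined with $\Delta = O(g^{(a)}(n))$ from Theorem \ref{thm:LPalignment}. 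Optimising then gives $k^{3/2}/n^{1/2}$-type balance and hence $k = O((g^{(a)}(n))^{2/3})$, with a constant $C_1(x,\abs\abc)$ that is a fixed multiple of the constant in Theorem \ref{thm:LPalignment}; since the latter constant tends to $0$ as $\abs\abc \to \infty$, so does $C_1$.

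I would then assemble: almost surely $\alreg{\fl{\abs\abc n - xn^a}}{n} \le k + 1 = O((g^{(a)}(n))^{2/3})$, take $\varlimsup$, and read off the stated bound and the $\abs\abc\to\infty$ vanishing of the constant. The main obstacle, and the step I expect to require the most care, is the deterministic ``cost of $k$ gap steps'' estimate — i.e.\ showing rigorously that forcing $k$ non-diagonal steps into the path decreases the best achievable LCS-score by an amount that grows superlinearly enough in $k$ (on the correct scale set by the rectangle's near-diagonal geometry) to yield the $\nicefrac23$ exponent rather than exponent $1$. This is where the precise Chvátal--Sankoff / subadditivity input and the geometry of the thin rectangle $m_n \times n$ with $m_n = \abs\abc n - xn^a$ interact, and one must be careful that the error terms from Theorem \ref{thm:LPalignment} (in particular the $\sqrt{n\log n}$ versus $n^a$ dichotomy in $g^{(a)}$) are correctly propagated through the optimisation over $k$. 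Everything else — the reduction to gap-step counting and the final $\varlimsup$ — is routine given Section \ref{sec:ORmi} and Theorem \ref{thm:LPalignment}.
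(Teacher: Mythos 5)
There is a genuine gap, on two counts. First, your reduction ``number of regions $\le$ number of gap steps of the $\beta=0$ optimiser $+\,1$'' is vacuous in this geometry: every path from $(0,0)$ to $(\fl{\abs\abc n - xn^a},n)$ must take at least $\fl{\abs\abc n - xn^a}-n\sim(\abs\abc-1)n$ gaps, so the gap count of any optimiser is of order $n$, not of order $(g^{(a)}(n))^{2/3}$. What the paper controls is the \emph{difference} between the optimisers at $\beta=0$ and at the last critical penalty $\beta_R$: Theorem \ref{thm:LPalignment} at $\beta=0$ gives $z_0\le n$ and $y_0\le 2(C+\e)g^{(a)}(n)+\fl{\abs\abc n-xn^a}-n$, while the constructed path $\pi_S$ of Lemma \ref{lem:pis} gives $z_R\ge n-u_n(a)\cdot e_2$ and $y_R=\fl{\abs\abc n-xn^a}-n$. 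It is this second input at $\beta_R$ --- entirely absent from your proposal --- that yields $z_0-z_R+y_0-y_R=O(g^{(a)}(n))$ almost surely for all large $n$.

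Second, your mechanism for the exponent $\nicefrac23$ is not the right one and does not work. In the paper the exponent is produced by the deterministic counting argument of Lemma \ref{lem:future} (after Fern\'andez-Baca--Sepp\"al\"ainen--Slutzki): by Lemma \ref{lem:critical} each change of region corresponds to a distinct irreducible fraction $\beta_{k+1}=(z_{\beta_k}-z_{\beta_{k+1}})/(y_{\beta_k}-y_{\beta_{k+1}})$, the numerators and denominators telescope to $z_0-z_R+y_0-y_R\le Cg^{(a)}(n)$, and Euler-totient asymptotics bound the number of such fractions by $C(g^{(a)}(n))^{2/3}$; no curvature of the shape function enters anywhere. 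Your proposed ``superlinear cost of $k$ off-diagonal steps'' amounts to a quantitative strict-concavity estimate for the LCS shape function, which is not rigorously available, and even taken at face value your balance $k^{3/2}/n^{1/2}\lesssim\Delta=O(\sqrt{n\log n})$ (case $a\le\nicefrac12$) gives $k\lesssim n^{2/3}(\log n)^{1/3}$, much larger than the claimed $(g^{(a)}(n))^{2/3}=(n\log n)^{1/3}$, so your route would not prove the theorem even granting the missing concavity input. The correct assembly is: the two endpoint comparisons above, then Lemma \ref{lem:future}(1), with the constant inherited from $u_n(a)\cdot e_2$ and from Theorem \ref{thm:LPalignment}, both of which vanish as $\abs\abc\to\infty$.
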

		
	We also have a bound of the same order for the expected number of optimality regions.
	\begin{theorem} 
		\label{thm:ORexpregions}
		There exists a constant $C_2(x, \abs\abc)$, 
		i
		so that
		\be\label{eq:exr}
		\varlimsup_{n\to \infty} \frac{ \E\big[\alreg{\abs\abc n-xn^a}n\big] }{(g^{(a)}(n))^{\nicefrac23} }\le C_2(x, \abs\abc).
		\ee
		The constant tends to $0$ as the alphabet size tends to $\infty$.
	\end{theorem}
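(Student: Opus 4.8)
\textbf{Proof proposal for Theorem \ref{thm:ORexpregions}.}

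The plan is to upgrade the almost-sure bound of Theorem \ref{thm:ORprobregions} to a bound in expectation by combining it with a deterministic worst-case bound on $\alreg{m}{n}$ and a tail estimate controlling the contribution of the bad events. First I would recall from Section \ref{sec:ORmi} the structural fact that the number of optimality regions is controlled by a last passage quantity: as noted after Theorem \ref{thm:tightness}, one has an inequality of the type $R_{m,n} \le n - (\text{passage time})$, and in the alignment setting $\alreg{m}{n}$ is bounded above by a simple monotone functional of the environment — for instance by $\min(m,n)+1$ or by the number of non-diagonal steps forced in any optimal path, which in turn is at most $2n$. This gives a crude but \emph{deterministic} upper bound $\alreg{\abs\abc n - xn^a}{n} \le C n$ valid for every realisation $\omega$.

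Next I would make the almost-sure statement of Theorem \ref{thm:ORprobregions} quantitative. The proof of that theorem (in Section \ref{sec:ORalignment}) proceeds via Theorem \ref{thm:LPalignment} and the Borel--Cantelli argument behind it; extracting from that argument the associated probabilistic estimate, one gets that for every $\e>0$ there is a constant $C_1' = C_1'(x,\abs\abc,\e)$ with
\be
\bP\Big\{ \alreg{\abs\abc n - xn^a}{n} > C_1'\, (g^{(a)}(n))^{2/3} \Big\} \le \e_n,
\ee
where $\e_n \to 0$; in fact from the exponential/Gaussian-type tail bounds underlying Theorem \ref{thm:LPalignment} (which are inherited from the Bernoulli environment and the coupling with $\cgs{n}{\cdot}$, as in Theorem \ref{thm:tightness}) one can take $\e_n$ summable, or at least $\e_n = o\big((g^{(a)}(n))^{2/3}/n\big)$, which is all that is needed. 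Then I split the expectation:
\be
\E\big[\alreg{\abs\abc n - xn^a}{n}\big] \le C_1'\,(g^{(a)}(n))^{2/3} + C n\cdot \bP\Big\{\alreg{\abs\abc n - xn^a}{n} > C_1'\,(g^{(a)}(n))^{2/3}\Big\},
\ee
and the second term is $Cn\,\e_n = o\big((g^{(a)}(n))^{2/3}\big)$ by the choice of $\e_n$. Dividing by $(g^{(a)}(n))^{2/3}$ and taking $\varlimsup$ yields the bound with $C_2(x,\abs\abc) = C_1'(x,\abs\abc,\e)$; letting the alphabet grow, the constant inherits the decay of $C_1(x,\abs\abc)$ from Theorem \ref{thm:ORprobregions}, since both are governed by the same constants in Theorem \ref{thm:LPalignment}, which carry the factor $\frac{1}{\abs\abc(\abs\abc-1)}$ and hence tend to $0$.

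The main obstacle is the second step: one must make sure that the probabilistic tail bound behind Theorem \ref{thm:ORprobregions} decays fast enough to beat the crude linear-in-$n$ deterministic bound on the rare event. For the regime $a>1/2$ this should be comfortable, since the relevant deviations are of the last-passage-time type and the Bernoulli environment gives exponential tails (as already exploited in Theorem \ref{thm:tightness}), so $\e_n$ decays faster than any polynomial and $Cn\,\e_n \to 0$ trivially. The delicate regime is $a \le 1/2$, where $g^{(a)}(n) = \sqrt{n\log n}$ and the error one can afford, $(g^{(a)}(n))^{2/3}/n = (\log n)^{1/3}/n^{2/3}$, is only polynomially small; here one needs the concentration estimate from the proof of Theorem \ref{thm:LPalignment} to give a tail of order $n^{-K}$ for $K$ as large as one wishes (obtainable by tuning the constant in the $\sqrt{n\log n}$ threshold), which the sub-Gaussian/Komlós--Major--Tusnády-type input does provide. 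Once that quantitative input is in hand, the splitting argument above closes the proof with no further difficulty.
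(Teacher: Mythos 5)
Your proposal follows essentially the same route as the paper's proof: bound $\alreg{m}{n}$ by $n$ (via \eqref{eq:RnGbound}) on a bad event of polynomially small probability and by $C\,(g^{(a)}(n))^{2/3}$ on the good event, where the paper's good event $D_n$ is that the constructed path $\bar\pi_S$ exits through the north boundary, and its failure probability is made $O\big(n^{-3}(\log n)^{-1}\big)$ precisely by tuning the constants $c_1=c_2=12/(\abs\abc-1)^2$ in $u_n(a)$ --- exactly the ``tune the constant in the $\sqrt{n\log n}$ threshold'' step you identify, after which $n\,\P\{D_n^c\}\to 0$ and Lemma \ref{lem:future} gives the $(g^{(a)}(n))^{2/3}$ bound on $D_n$. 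One small correction: the quantitative tail does not come from any coupling with $\cgs{n}{\cdot}$ (that coupling exists only for the independent model, not the correlated alignment environment), but from the moderate-deviations Lemma \ref{lem:MD} applied to the geometric horizontal increments of $\bar\pi_S$; this misattribution does not affect the structure or validity of your argument.
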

	\begin{remark} These results are also valid for the independent model.
	Given the stronger bounds for the independent model,
	we do not expect \eqref{eq:exr} to be sharp, particularly for small values of the exponent $a$, and this is supported by Monte Carlo simulations. For example these suggest that for $a \le 1/2$ the number of 
	 expected regions is bounded (see Figure \ref{fig:sim1}). This is also the case for the independent model as we see in Theorem 
	 \ref{cor:ORhalf}.  For $a > 1/2$, the simulations in Figure \ref{fig:sim2} 
	 show that the expected number of regions is growing 
	 for small alphabet sizes, but again the exponent of growth is smaller than $2a/3$ and it seems to 
	 depend on the alphabet size.  
	\end{remark}
		\begin{figure}
		\begin{center}
		 \includegraphics[scale=.4]{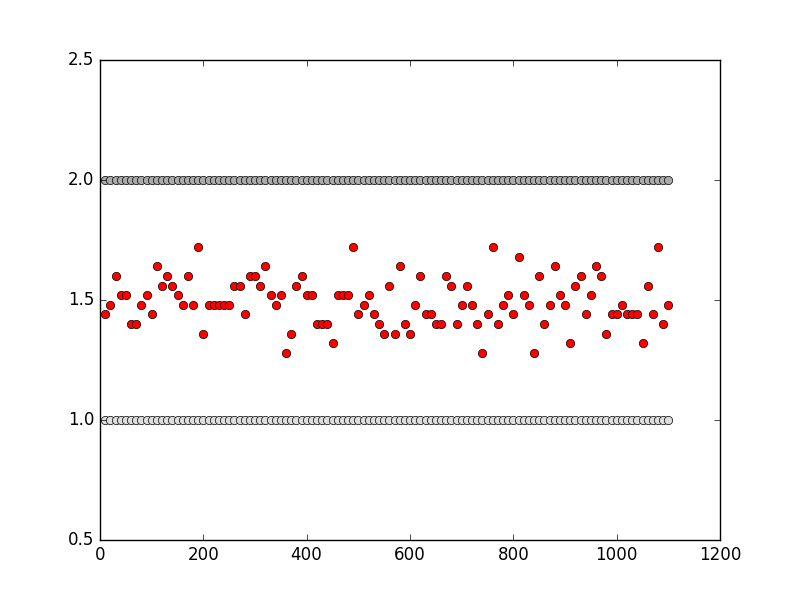}
		    \hspace{-0.5cm}
		        \includegraphics[scale=.4]{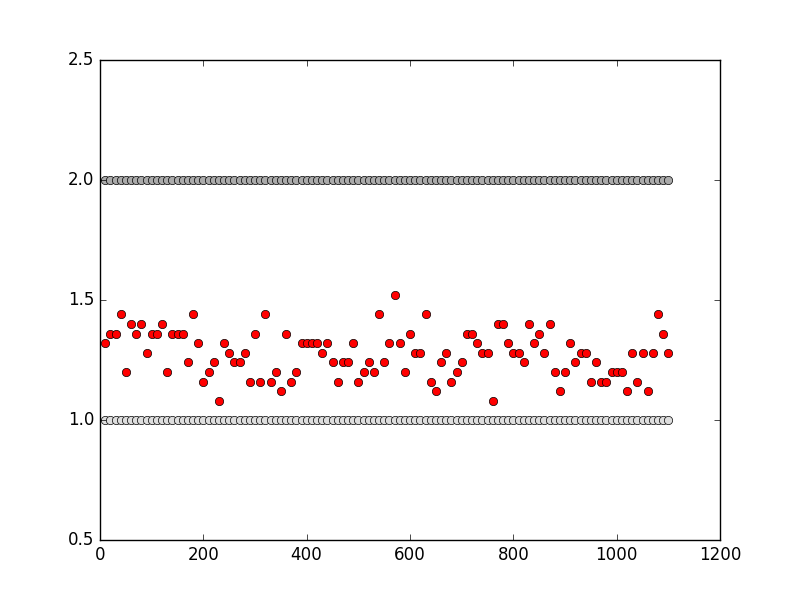}
		\end{center}
		\caption{ \footnotesize Monte Carlo simulations for the empirical maximum, minimum and expected number of regions for up to $n = 1000$ 
		in the alignment model for small values of $a$. For each $n$, 25 independent pairs of strings were uniformly chosen. $n$ grows in increments of size $10$ \newline
		(Left) $|\abc| = 20, a = 1/3, x=1$. (Right) $|\abc| = 2, a = 1/2, x=1$.  
		The simulations suggest the expected number of regions is bounded, and in agreement with the theoretical bound 
		obtained for the independent model.}
		  \label{fig:sim1}
	\end{figure}
	\medskip
	
	\begin{figure}
		\begin{center}	
		    \includegraphics[scale=.4]{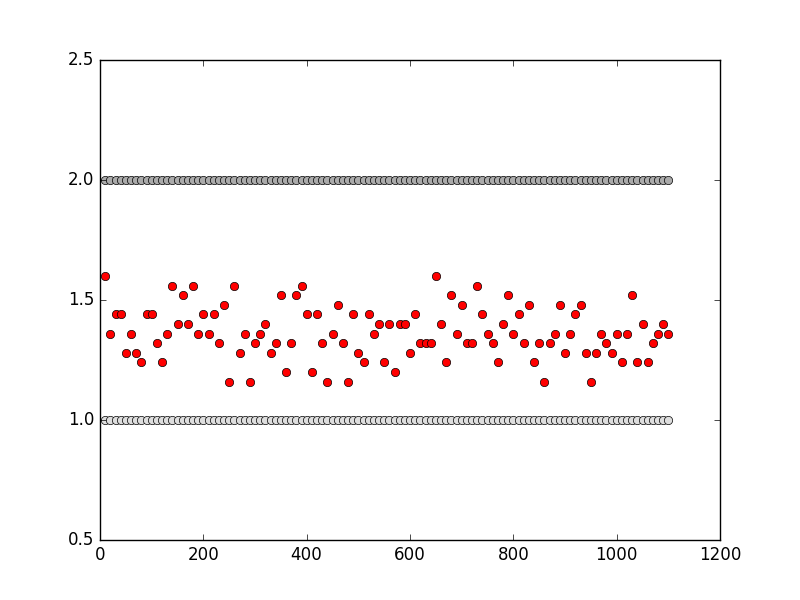}
		    \hspace{-0.5cm}
 		 \includegraphics[scale=.4]{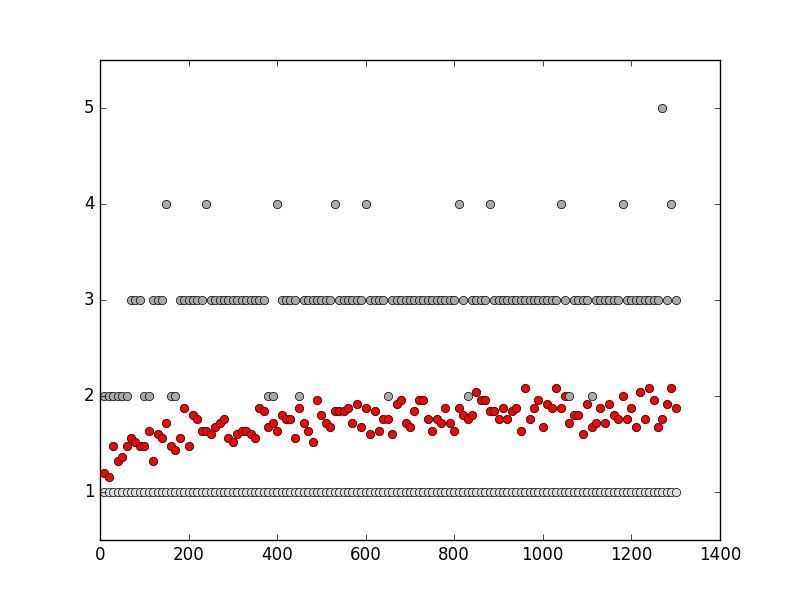}	
		\end{center}
		\caption{ \footnotesize Monte Carlo simulations for the empirical maximum, minimum and expected number of regions
		in the alignment model when $a$ is close to 1. For each $n$, 25 independent pairs of strings were uniformly chosen. $n$ grows in increments of size $10$.\newline
		(Left) $|\abc| = 20, a = 0.8, x=1$. (Right) $|\abc| = 2, a = 0.8, x=1$.  
		The simulations suggest that the expected number of regions is bounded for large alphabet sizes, but  for small size alphabets we see growth.}
		  \label{fig:sim2}
	\end{figure}

\section{ Model independent results for optimality regions and maximal paths}
	\label{sec:ORmi}
	In this section we present preliminary results about the two models that do not depend on the correlation structure of the weights. We therefore write $\regmn$ to mean either $\alregmn$ or $\indreg{m}{n}$. We also introduce the vocabulary usually used in the sequence alignment literature.
	
	Let $\pi=\set{u_0,\ldots,u_M}\in\Pi_{0, (m,n)}$ denote an admissible path and recall that the increments $z_k=u_k-u_{k-1}\in\mR=\set{e_1,e_2,e_1+e_2}$. Thus for each increment there are three possibilities:
				\begin{enumerate}
					\item $z_k = e_1+e_2$ with $\env_{u_k} = 0$, called a \emph{mismatch},
					\item $z_k \in\set{ e_1, e_2}$, called a \emph{gap},
					\item $z_k = e_1+e_2$ with $\env_{u_k} = 1$, called a  \emph{match}.
				\end{enumerate}
			Let $x=x(\pi)$ be the number of mismatches, $y=y(\pi)$ the number of gaps and $z=z(\pi)$ 
			the number of matches of $\pi$. 
			We also denote this triplet by
			$\mathbf s(\pi) = (x(\pi), y(\pi), z(\pi)).$
				
			Fix parameters $\alpha,\beta\geq 0$. Under potential $V_{\alpha, \beta}$ the score of the path $\pi$ is then given by 
				\be \label{eq:path:score}
						w_{\alpha, \beta}(\pi)  = z -\alpha x - \beta y.
				\ee
			Since any diagonal step is equivalent to an $e_1$ step followed by a $e_2$ step or vice versa, we have
			\be\label{eq:path:steps}
				m+n = 2x(\pi) +2z(\pi) + y(\pi) \quad\quad \text{ for all } \pi\in\Pi_{0, (m,n)}.
			\ee
			The last passage time $\is mn\alpha\beta$ (or $\as mn\alpha\beta$, depending on the environment) under potential defined in \eqref{eq:vab} can now be rewritten as
				\be \notag
				\is mn\alpha\beta = \max_{ \pi \in \Pi_{0, (m,n)}} \{ w_{\alpha, \beta} (\pi) \}.
				\ee	
				Our focus will be on the \emph{minimal-gap maximisers (MGM)}: paths whose score attains the last passage time with the smallest possible number of gaps. Since any two MGM paths have the same number of gaps and the same score it follows from \eqref{eq:path:steps} that
			\begin{lemma} \label{lm:3.2}
						All MGM paths have the same number of gaps, matches and mismatches.
			\end{lemma}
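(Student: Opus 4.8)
The claim is essentially a bookkeeping consequence of two facts that are already on the table: first, by definition every minimal-gap maximiser (MGM) has the same score $w_{\alpha,\beta}(\pi) = z(\pi) - \alpha x(\pi) - \beta y(\pi)$, equal to the last passage time $\is mn\alpha\beta$; second, by definition every MGM has the same number of gaps $y(\pi)$, namely the minimal value of $y$ among all maximisers. So the plan is: let $\pi_1,\pi_2$ be two MGM paths, write $y_1 = y(\pi_1)$, $y_2 = y(\pi_2)$, and observe $y_1 = y_2 =: y$ from the defining minimality property. Then, since the scores agree,
\[
z(\pi_1) - \alpha x(\pi_1) - \beta y = z(\pi_2) - \alpha x(\pi_2) - \beta y,
\]
so $z(\pi_1) - \alpha x(\pi_1) = z(\pi_2) - \alpha x(\pi_2)$.

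The remaining input is the step-count identity \eqref{eq:path:steps}: for any admissible path from $0$ to $(m,n)$ one has $m + n = 2x(\pi) + 2z(\pi) + y(\pi)$. Applying this to $\pi_1$ and $\pi_2$ and using $y(\pi_1) = y(\pi_2)$ gives $x(\pi_1) + z(\pi_1) = x(\pi_2) + z(\pi_2)$. Combining this with $z(\pi_1) - \alpha x(\pi_1) = z(\pi_2) - \alpha x(\pi_2)$, I would subtract (or take an appropriate linear combination): from the two relations one gets $(1+\alpha)\big(x(\pi_1) - x(\pi_2)\big) = 0$, hence $x(\pi_1) = x(\pi_2)$ because $\alpha \ge 0$ so $1 + \alpha \neq 0$, and then $z(\pi_1) = z(\pi_2)$ as well. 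Together with $y(\pi_1) = y(\pi_2)$ this says $\mathbf s(\pi_1) = \mathbf s(\pi_2)$, which is exactly the assertion of Lemma \ref{lm:3.2}.

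There is essentially no obstacle here; the only point that needs a moment's care is the case distinction in the linear algebra. If $\alpha > 0$ the argument above is immediate. If $\alpha = 0$, the score equality directly reads $z(\pi_1) = z(\pi_2)$, and then \eqref{eq:path:steps} together with $y(\pi_1) = y(\pi_2)$ forces $x(\pi_1) = x(\pi_2)$; so the conclusion holds uniformly for all $\alpha \ge 0$. (The same remark shows the statement is genuinely about MGM paths: for non-minimal maximisers one could trade a match plus a mismatch for two gaps, or similar, so the triplet need not be constant in general — the minimality of $y$ is what pins everything down.) I would therefore present the proof as: equal score $+$ equal gap count $\Rightarrow$ equal value of $z - \alpha x$; equal gap count $+$ \eqref{eq:path:steps} $\Rightarrow$ equal value of $x + z$; these two linear relations, with $\alpha \ge 0$, force $x$, $z$ (and trivially $y$) to agree.
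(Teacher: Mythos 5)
Your proposal is correct and follows essentially the same route as the paper, which justifies the lemma in one line by noting that any two MGM paths share the same score and the same (minimal) number of gaps and then invoking the step-count identity \eqref{eq:path:steps}; your write-up merely makes explicit the linear algebra (equal $z-\alpha x$ and equal $x+z$, with $1+\alpha\neq 0$) that the paper leaves implicit.
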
 

			We denote the set of MGM paths by  $\pimax$. When $\alpha=0$
			 we write $\pimo\beta$.

			\begin{definition}
				Two points $(\alpha_1, \beta_1)$ and $(\alpha_2, \beta_2)$ belong in different
				\emph{optimality regions of the parameter space} for a fixed terminal point $(m,n)$ if and only if
				$\pim{\alpha_1}{\beta_1} \cap \pim{\alpha_2}{\beta_2}= \varnothing$. 
			\end{definition}
		
			
			For future reference we record the following observations:

			\begin{enumerate}
				\item For fixed $\alpha\geq 0$ and any $\beta_1 \le \beta_2$ we have 
				\begin{align}
					\label{eq:monotonicity}
						 w_{\alpha, \beta_1} (\pi) &\ge  w_{\alpha, \beta_2} (\pi)
				\end{align}
				and therefore this inequality also holds for the passage times:
				\begin{align}		 
						 \is mn\alpha{\beta_1} \ge \is mn\alpha{\beta_2}\quad\quad\text{and}\quad\quad \as mn\alpha{\beta_1} \ge \as mn\alpha{\beta_2}
				\end{align}
					
				\item For $\alpha = -1$ and $\beta = -1/2$, the weight of any path $\pi\in\Pi_{0,(m,n)}$ is given by
						\be
							w_{-1, -1/2}(\pi) = \frac{m+n}{2}
						\ee
			\end{enumerate}

			\begin{lemma}
				\label{lem:cones}
					All optimality regions in the $(\alpha, \beta)$-positive quadrant are semi-infinite 
					cones bounded by the coordinate axes and lines of the form
					$\beta = c+ \alpha(c + \nicefrac12)$.
			\end{lemma}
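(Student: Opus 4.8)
The plan is to eliminate the number of matches $z(\pi)$ with the conservation law \eqref{eq:path:steps}, so that the score becomes affine in $(\alpha,\beta)$ with a zero at the universal point $(-1,-\tfrac12)$, and then read off the geometry of the regions. First I would substitute $z(\pi)=\tfrac{m+n}{2}-x(\pi)-\tfrac12 y(\pi)$ into \eqref{eq:path:score} to get, for every $\pi\in\Pi_{0,(m,n)}$,
\[
w_{\alpha,\beta}(\pi)=\frac{m+n}{2}-(1+\alpha)\,x(\pi)-\Bigl(\frac12+\beta\Bigr)\,y(\pi),
\]
which extends the second observation recorded above. Writing $\sigma=1+\alpha$ and $\tau=\tfrac12+\beta$ (so $\sigma\ge1$, $\tau\ge\tfrac12$ on $\mathcal C$), maximising $w_{\alpha,\beta}$ over $\Pi_{0,(m,n)}$ is the same as minimising the linear functional $\sigma\,x(\pi)+\tau\,y(\pi)$, whose set of minimisers is unchanged under the scaling $(\sigma,\tau)\mapsto(t\sigma,t\tau)$, $t>0$; the subsequent selection of the minimal-gap paths among the maximisers is purely combinatorial. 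Hence $\pimax$ depends on $(\alpha,\beta)$ only through the ratio $[\,1+\alpha:\tfrac12+\beta\,]$, that is, only through the ray from $(-1,-\tfrac12)$ on which $(\alpha,\beta)$ lies. Thus every optimality region is a union of such rays truncated to $\mathcal C$, i.e.\ the intersection with $\mathcal C$ of a cone with apex $(-1,-\tfrac12)$, which in particular makes it semi-infinite.

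Next I would identify the boundaries. Two paths $\pi_1,\pi_2$ with signatures $\mathbf s(\pi_i)=(x_i,y_i,z_i)$ have equal score precisely on the zero set of $(1+\alpha)(x_1-x_2)+(\tfrac12+\beta)(y_1-y_2)$. If $y_1=y_2$ this set is the line $\alpha=-1$, disjoint from $\mathcal C$, so inside $\mathcal C$ one of the two paths is always strictly better and no boundary is created; if $y_1\ne y_2$ it is the line $\beta=c+\alpha\bigl(c+\tfrac12\bigr)$ with $c=-\tfrac{x_1-x_2}{y_1-y_2}-\tfrac12$, and every such line passes through $(-1,-\tfrac12)$. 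On the interior of any cell of the finite arrangement of these lines inside $\mathcal C$ no two distinct signatures tie, so the set of maximising paths is exactly the set of all paths of one fixed signature and $\pimax$ is constant there; since a path (in the fixed environment) determines its signature, two such cells lie in the same region iff that signature agrees. Together with the fact that, as the direction $[\sigma:\tau]$ rotates monotonically, the minimising face of the finite planar set $\{(x(\pi),y(\pi)):\pi\in\Pi_{0,(m,n)}\}$ sweeps monotonically along its lower-left convex hull, this shows that each region is a single angular sector with apex $(-1,-\tfrac12)$ intersected with $\mathcal C$, i.e.\ a semi-infinite cone bounded by the coordinate axes and by lines of the form $\beta=c+\alpha(c+\tfrac12)$.

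The one genuinely delicate point is the last one: ensuring that the minimal-gap tie-breaking does not fragment a region into disconnected angular pieces and that no region is lower-dimensional. This I expect to settle via the monotonicity just quoted — both the optimal signature(s) and the minimal value of $y$ among them vary monotonically in the direction $[\sigma:\tau]$, so the same $\pimax$ cannot reappear across an intervening cell, and any piece sitting on an indifference line is absorbed into the neighbouring full-dimensional cell on its minimal-gap side. Everything else is routine bookkeeping with \eqref{eq:path:score}--\eqref{eq:path:steps} and Lemma~\ref{lm:3.2}.
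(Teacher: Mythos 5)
Your proposal is correct and follows essentially the same route as the paper: rewriting the score via \eqref{eq:path:steps} as $w_{\alpha,\beta}(\pi)=\tfrac{m+n}{2}-(1+\alpha)x(\pi)-(\tfrac12+\beta)y(\pi)$ is just a reparametrisation of the paper's identity \eqref{eq:shift}, so in both arguments the optimal (and MGM) paths depend only on the ray through $(-1,-\nicefrac12)$, which gives the cone structure bounded by lines $\beta=c+\alpha(c+\nicefrac12)$. Your additional discussion of tie lines and the convex-hull sweep only supplies connectedness details that the paper leaves implicit.
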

			
			This result was first proved in \cite{Gus-Bala-Naor}; we give a simplified proof here:
		
			\begin{proof}
		 Pick any $(\alpha, \beta) \in \R^2_+$ and let $(0, \beta')$ be the point
		 of intersection  of the linear segment connecting $(\alpha, \beta)$ and $(-1, -1/2)$ with the $y$-axis, i.e.
		 \be \label{eq:beta:lin}
		 	\beta = (\alpha+1)\beta'+ \frac{\alpha}{2}.
		 \ee
		 We will show that the optimal paths associated with $(0, \beta')$ are the same as those associated to $(\alpha, \beta)$. 
		 Consider any $\pi\in\Pi_{0,(m,n)}$ with  $\mathbf{ s }(\pi)=(x,y,z)$. Then
		 \begin{align}
		 w_{\alpha, \beta}(\pi) \notag
		 	&= z - x \alpha - y \beta = z - x\alpha - y\beta + y\beta' - y\beta' \notag 
		 	= w_{0,\beta'}(\pi) - x \alpha - (\beta - \beta')y \notag \\
			&=  w_{0,\beta'}(\pi) - x \alpha - \Big( (\alpha+1)\beta'+ \frac{\alpha}{2} - \beta'\Big)y, \quad \text{ by \eqref{eq:beta:lin},} \notag \\
			&=  w_{0,\beta'}(\pi) - \alpha\rb{ \frac{m+n}2 - w_{0, \beta'}(\pi)}, \quad \text{ by \eqref{eq:path:steps},} \notag \\
			& = (1+\alpha)  w_{0,\beta'}(\pi)  - \alpha \frac{m+n}2. \label{eq:shift}
		 \end{align}
		 So the weight of any path with  parameters $(\alpha, \beta)$ is an affine function
		  of the weight with parameters $(0,\beta')$ and
		 the two parameters
		 must belong to the same optimality region.
	\end{proof}

	\par\noindent Under a fixed environment $\env$, we define the \emph{critical penalties}
		\be\label{eq:cb}
			0 < \beta_1 < \cdots < \beta_{\regmn} < \infty 
		\ee
	to be the the gap penalties for $\alpha =0$ at which the optimality region changes. 
	We will also write $\beta_\infty$ for the last threshold $\beta_{\regmn}$.

	\begin{lemma}[Critical penalties]
	\label{lem:critical}
	For each $k \le \regmn$ let $\pi^{(\beta_k)} \in \pimo{\beta_k}$, with 
	$\mathbf{s}(\pi^{(\beta_k)}) = (x_{\beta_k}, y_{\beta_k}, z_{\beta_k})$. Then  
	\begin{align}
	\label{eq:betaK}
	\beta_{k+1} = \frac{z_{\beta_k} - z_{\beta_{k+1}}}{y_{\beta_k} - y_{\beta_{k+1}}}.
	\end{align}
	\end{lemma}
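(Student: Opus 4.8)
The plan is to exploit the structure of the last passage time as a function of the penalty, with $\alpha$ fixed at $0$. For $\pi \in \Pi_{0,(m,n)}$ write its weight as $w_{0,\beta}(\pi) = z(\pi) - \beta\, y(\pi)$; then the last passage time at penalty $\beta$ equals $\max_{\pi}\big(z(\pi) - \beta\, y(\pi)\big)$, a maximum of affine functions of $\beta$ taking only finitely many distinct forms, since the triplets $\mathbf{s}(\pi)$ lie in a finite set. Hence it is a convex, continuous, non-increasing, piecewise-linear function of $\beta$ on $[0,\infty)$. First I would record two consequences of the definitions: at any fixed $\beta$ a path is optimal precisely when $z(\pi) - \beta\, y(\pi)$ attains this maximum; and, since all MGM paths at $\beta$ share one triplet $\mathbf{s}^*(\beta)$ by Lemma \ref{lm:3.2}, in fact \emph{every} path with triplet $\mathbf{s}^*(\beta)$ is an MGM path at $\beta$ (it is optimal, and it realises the minimal gap count among optimal paths). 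Consequently the set of MGM paths at $\beta$ is exactly $\{\pi : \mathbf{s}(\pi) = \mathbf{s}^*(\beta)\}$, so two penalties lie in the same optimality region if and only if they give the same triplet, and $\beta \mapsto \mathbf{s}^*(\beta)$ is constant on each optimality region.

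The second step is to locate $\beta_k$ among the linear pieces. Of the affine functions $z - \beta\, y$ attaining the maximum at $\beta_k$, the one with the fewest gaps $y$ is the one that keeps attaining it for $\beta$ just above $\beta_k$; since the minimal-gap rule selects precisely that one, $\mathbf{s}(\pi^{(\beta_k)}) = \mathbf{s}^*(\beta_k) =: (x_{\beta_k}, y_{\beta_k}, z_{\beta_k})$ is the triplet governing the optimality region immediately to the right of $\beta_k$. Hence this affine piece is the maximal one throughout $[\beta_k,\beta_{k+1})$ — up to the next crossing, which is exactly the next critical penalty $\beta_{k+1}$ — so $\max_\pi w_{0,\beta}(\pi) = z_{\beta_k} - \beta\, y_{\beta_k}$ for $\beta \in [\beta_k,\beta_{k+1})$, and likewise $\max_\pi w_{0,\beta}(\pi) = z_{\beta_{k+1}} - \beta\, y_{\beta_{k+1}}$ for $\beta \in [\beta_{k+1},\beta_{k+2})$.

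Finally I would equate the two expressions at $\beta = \beta_{k+1}$, using the continuity established in the first step, to obtain $z_{\beta_k} - \beta_{k+1} y_{\beta_k} = z_{\beta_{k+1}} - \beta_{k+1} y_{\beta_{k+1}}$. If $y_{\beta_k} = y_{\beta_{k+1}}$ this would force $z_{\beta_k} = z_{\beta_{k+1}}$ and then $x_{\beta_k} = x_{\beta_{k+1}}$ by \eqref{eq:path:steps}, so the MGM triplet — and with it the set of MGM paths — would not change across $\beta_{k+1}$, contradicting that $\beta_{k+1}$ is a critical penalty. Therefore $y_{\beta_k} \ne y_{\beta_{k+1}}$ and dividing gives \eqref{eq:betaK}. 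I expect the main obstacle to be the second step: identifying \emph{which} of the two linear pieces adjacent to $\beta_k$ corresponds to $\pi^{(\beta_k)}$. This is exactly where the minimal-gap tie-breaking in the definition of $\pimo{\beta}$ is essential — the other choice would yield a false formula — so it must be argued carefully rather than asserted.
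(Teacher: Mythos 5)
Your proof is correct and takes essentially the same route as the paper: the paper's argument is precisely the continuity-of-the-optimal-score step, equating the scores of the MGM paths from the two adjacent regions at $\beta_{k+1}$ to get $z_{\beta_k}-\beta_{k+1}y_{\beta_k}=z_{\beta_{k+1}}-\beta_{k+1}y_{\beta_{k+1}}$ and then dividing. Your extra steps (the piecewise-linear, convex structure of $\beta\mapsto\max_\pi w_{0,\beta}(\pi)$, the identification of the MGM triplet at $\beta_k$ with the affine piece governing $[\beta_k,\beta_{k+1})$, and the check that $y_{\beta_k}\neq y_{\beta_{k+1}}$) only make explicit what the paper leaves implicit.
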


	\begin{proof}

		Continuity of the optimal score in the parameter $\beta$ implies that at $\beta_{k+1}$ 
		the weights will be the same whether $\beta_{k+1}$ is approached by above (considering 
		scores of paths in $\pimo{\beta_{k+1}}$) or from below (scores of paths in $\pimo{\beta_{k}}$). 
		Therefore
		\[
			z_{\beta_{k}} - \beta_{k+1} y_{\beta_k} = z_{\beta_{k+1}} - \beta_{k+1} y_{\beta_{k+1}} 
		\] 
		which yields the conclusion.
		\end{proof}
	
	\par\noindent Upper bounds for  the maximal value of $\regmn$ can be found in \cite{Fern-Sep-Slut}. For the LCS model these are sharp when the alphabet size grows to infinity. 
	The results and arguments in \cite{Fern-Sep-Slut} can be extended
	to give the upper bound 	
	\be
	 	\reg{\fl{ns} + o(n)}{\fl{nt} +o(n)} \le Cn^{2/3},
	\ee
	that holds in any fixed realization of the environment, any $(s,t)\in \R^2_+$ and $n$ 
	large enough. They also proved that environments that actually generate so many regions 
	exist, at least when the alphabet size was infinite. 
	This was later verified also for finite alphabets in \cite{Cynthia}.
	 
	\begin{lemma}
		\label{lem:naivebound}
		For $\beta_0 =0$ and each critical $\beta_k$ in \eqref{eq:cb}, choose an MGM path $\pi_k \in \pimo{\beta_{k}}$ with 
	$\mathbf{s}\rb{\pi_k} = (x_k, y_k, z_k)$ for $0 \le k \le \regmn$. Then
		\be
		\label{eq:naivebound}
		\regmn \le \min\Big\{ z_0 - z_{\regmn}, \frac{x_{\regmn}-x_0}{2}, \frac{y_0 - y_{\regmn}}{2}, n\wedge m - z_0 \Big\}.
		\ee
	\end{lemma}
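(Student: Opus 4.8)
The plan is to show that each of the four numbers on the right of \eqref{eq:naivebound} is at least $\regmn$, by first establishing a strict monotonicity of the step-count triples $\mathbf{s}(\pi_k)=(x_k,y_k,z_k)$ as $k$ ranges over $0,1,\dots,\regmn$, and then reading off the four bounds from telescoping sums together with \eqref{eq:path:steps}. Throughout put $\beta_0:=0$, so $\beta_0<\beta_1<\cdots<\beta_{\regmn}$ by \eqref{eq:cb}.

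The monotonicity step is the heart of the argument, and I would phrase it through the optimal score $G(\beta):=\is mn0\beta=\max_{\pi}\{z(\pi)-\beta y(\pi)\}$: as a maximum of affine functions of $\beta$ it is convex, non-increasing and piecewise linear, and its breakpoints are exactly the critical penalties \eqref{eq:cb}. By Lemma \ref{lm:3.2} together with the minimal-gap rule defining $\pimo{\beta_k}$, the affine piece of $G$ immediately to the right of $\beta_k$ is $\beta\mapsto z_k-\beta y_k$. Convexity of $G$ then forces the slopes $-y_k$ to be strictly increasing, i.e.\ $y_0>y_1>\cdots>y_{\regmn}$; and matching the two pieces at the breakpoint $\beta_{k+1}$ --- which is exactly the content of Lemma \ref{lem:critical} --- gives $z_k-z_{k+1}=\beta_{k+1}(y_k-y_{k+1})>0$, since $\beta_{k+1}>0$. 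Hence also $z_0>z_1>\cdots>z_{\regmn}$, and then $x_0<x_1<\cdots<x_{\regmn}$ by \eqref{eq:path:steps}.

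Next I would turn each strict inequality into a quantitative one. Since \eqref{eq:path:steps} gives $y(\pi)\equiv m+n\pmod 2$ for every admissible path, all the $y_k$ have the same parity, so a strict drop is a drop by at least $2$: $y_k-y_{k+1}\ge 2$. Also $z_k-z_{k+1}\ge 1$, and subtracting \eqref{eq:path:steps} for $\pi_k$ and $\pi_{k+1}$ gives $2(x_{k+1}-x_k)=(y_k-y_{k+1})+2(z_k-z_{k+1})\ge 4$, hence $x_{k+1}-x_k\ge 2$. Summing over $0\le k\le\regmn-1$ yields $z_0-z_{\regmn}\ge\regmn$, $y_0-y_{\regmn}\ge 2\regmn$ and $x_{\regmn}-x_0\ge 2\regmn$, which are the first three bounds. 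For the fourth, I would combine $y_0-y_{\regmn}\ge 2\regmn$ with $y_0=m+n-2x_0-2z_0$ (from \eqref{eq:path:steps}) and the trivial bound $y_{\regmn}\ge|m-n|=m+n-2(m\wedge n)$, valid since any admissible path to $(m,n)$ makes at most $m\wedge n$ diagonal steps: this gives $2\regmn\le y_0-y_{\regmn}\le 2(m\wedge n-x_0-z_0)\le 2(m\wedge n-z_0)$. Taking the minimum of the four estimates finishes the proof.

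The one point that needs genuine care --- and where I expect to spend most of the effort --- is the \emph{strictness} of the monotonicity: an elementary averaging of the optimality inequalities for $\pi_k$ and $\pi_{k+1}$ only yields $y_k\ge y_{k+1}$, and one must exclude the possibility that two consecutive critical penalties carry the same MGM triple. This is exactly what is forbidden by the $\beta_k$ being honest kinks of the convex piecewise-linear function $G$ (equivalently, by the well-definedness of the quotient in Lemma \ref{lem:critical}): if two consecutive optimality regions shared a triple, then $G$ would have no kink between them and the optimal set could not change there. Everything after the monotonicity --- the parity remark, the bookkeeping through \eqref{eq:path:steps}, and the telescoping --- is routine.
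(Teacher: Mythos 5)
Your proof is correct and follows essentially the same route as the paper's: the key inequalities $y_k-y_{k+1}\ge 2$, $z_k-z_{k+1}\ge 1$, $x_{k+1}-x_k\ge 2$ from \eqref{eq:path:steps}, telescoping over the $\regmn$ consecutive critical penalties, and the bound $y_{\regmn}\ge m\vee n-m\wedge n$ combined with \eqref{eq:path:steps} for the last term in \eqref{eq:naivebound}. The only difference is presentational: the strict monotonicity that the paper asserts directly from the MGM condition (and Lemma \ref{lem:critical}) you justify via convexity of the piecewise-linear optimal score, and your parity argument spells out the paper's remark that a diagonal step is equivalent to two gaps.
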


	\begin{proof} 
	Distinct paths $\pi_i$ differ in the number of diagonal steps and the number of gaps. 
	Since a diagonal step is equivalent to two gaps,
	we have 
	$ 
	y_{i} - y_{i+1} \ge 2.
	$
	Furthermore it must be the case that 
	$ 
	z_i -  z_{i+1} \ge1
	;
	$
	otherwise $\pi_i$ would violate the MGM condition. Equation \eqref{eq:path:steps} and the last two inequalities give
	$ 
	x_{i+1} - x_i \ge 2.
	$
	Adding each inequality over $i$ gives the first three terms in the minimum of \eqref{eq:naivebound}.
	For the last term note that  $y_{\regmn} = n\vee m-m\wedge n$. Since $x_0 \ge 0$ \eqref{eq:path:steps}  yields 
		$2(n\wedge m - z_0 ) \ge y_0 - y_{\regmn} $. 
	\end{proof}

\begin{remark}
	\label{rmk:RnGbound}
	Notice that the last bound in \eqref{eq:naivebound} can be written as 
	\begin{align}
		\label{eq:RnGbound}
		\alregmn \leq n-\asz mn0 \quad\quad\text{and}\quad\quad\indreg{m}{n}  \leq n - \bsz mn. 
	\end{align}\qed
\end{remark}
	
Finally, we present a lemma that gives a useful bound on the number of regions if a bit more information is available. 

\begin{lemma} \label{lem:future}
Let $m = m(n)$ so that $m(n) \to \infty$ as $n \to \infty$. Let $g(n)$ be a deterministic function so that $\lim_{n\to \infty} g(n) = \infty$. Then, there exists an $N >0$ and a non-random constant $C_0$ so that  for all $n > N$ we have the inclusion of events 
\be \label{eq:epiphany}
A_n = \{ z_0 - z_R + y_0 - y_R \le g(n)\} \subseteq \{ R_{m,n} \le C_0 (g(n))^{\nicefrac23}\}.
\ee

In particular, 
\begin{enumerate}
\item If $\P\{ A_n ^c \text{  i.o  } \} = 0$, then
the number of optimality regions $R_{m,n}$ satisfies 
\be \label{eq:q1}
\varlimsup_{n  \to \infty} \frac{R_{m,n}}{g(n)^{\nicefrac23}} \le C_0, \quad \P-a.s.
\ee

\item  If $\P\{ A_n \} \to 1$, then the number of optimality regions $R_{m,n}$ satisfies 
\be \label{eq:q2}
\lim_{n  \to \infty} \P\Big\{ \frac{R_{m,n}}{g(n)^{\nicefrac23}} \le C_0 \Big\}=1.
\ee
\end{enumerate}
\end{lemma}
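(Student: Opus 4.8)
The plan is to deduce the inclusion \eqref{eq:epiphany} directly from the naive bound of Lemma~\ref{lem:naivebound} together with the elementary identity \eqref{eq:path:steps}, and then read off parts (1) and (2) as immediate probabilistic consequences. First I would recall that by \eqref{eq:naivebound} we always have the deterministic bound $R_{m,n} \le z_0 - z_R$ and $R_{m,n} \le (y_0 - y_R)/2$, and in fact the key point is that on the event $A_n$ both $z_0 - z_R$ and $y_0 - y_R$ are controlled by $g(n)$. So on $A_n$ we already get $R_{m,n} \le g(n)$. The improvement from $g(n)$ to $(g(n))^{2/3}$ must come from a second, finer argument: the total ``budget'' $z_0 - z_R + y_0 - y_R$ being small forces the consecutive jumps $z_k - z_{k+1}$ and $y_k - y_{k+1}$ to be small for most $k$, but these jumps cannot all be as small as $1$ and $2$ simultaneously because the critical penalties $\beta_1 < \cdots < \beta_R$ are strictly increasing, and by Lemma~\ref{lem:critical} $\beta_{k+1} = (z_{\beta_k} - z_{\beta_{k+1}})/(y_{\beta_k} - y_{\beta_{k+1}})$. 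Since the $\beta_{k+1}$ are distinct rationals with numerator at most $z_0 - z_R \le g(n)$ and denominator at most $y_0 - y_R \le g(n)$, a counting argument on Farey-type fractions (the number of distinct rationals $p/q$ with $p \le A$, $q \le B$ and, crucially, $\sum p \le A$, $\sum q \le B$ along the chain) yields at most $O((g(n))^{2/3})$ of them; this is exactly the mechanism already used in the $Cn^{2/3}$ bound attributed to \cite{Fern-Sep-Slut}. I would invoke that extended argument to produce the non-random constant $C_0$ and the threshold $N$.

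More concretely, the combinatorial heart is this: writing $a_k = z_{\beta_k} - z_{\beta_{k+1}} \ge 1$ and $b_k = y_{\beta_k} - y_{\beta_{k+1}} \ge 2$, we have $\sum_{k} a_k = z_0 - z_R$, $\sum_k b_k = y_0 - y_R$, hence $\sum_k (a_k + b_k) \le g(n)$ on $A_n$, and the ratios $a_k/b_k = \beta_{k+1}$ are strictly increasing in $k$. The number $R$ of terms in a strictly increasing sequence of fractions $a_k/b_k$ whose ``sizes'' $a_k + b_k$ sum to at most $S$ is $O(S^{2/3})$: indeed, distinct increasing fractions in lowest terms with denominators summing to $\le S$ number at most $O(S^{2/3})$ by the standard estimate $\sum_{q \le Q} \varphi(q) \asymp Q^2$ inverted, so $R = O(S^{2/3})$ with $S = g(n)$. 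Taking $C_0$ to be the implied constant and $N$ large enough that $g(n)$ exceeds any fixed threshold needed for the asymptotic counting estimate gives \eqref{eq:epiphany}.

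Granting the inclusion \eqref{eq:epiphany}, part (1) is immediate: if $\P\{A_n^c \text{ i.o.}\} = 0$ then $\P$-a.s. the event $A_n$ holds for all large $n$, so $R_{m,n} \le C_0 (g(n))^{2/3}$ for all large $n$, which is exactly \eqref{eq:q1}. For part (2), \eqref{eq:epiphany} gives $\P\{R_{m,n} \le C_0 (g(n))^{2/3}\} \ge \P\{A_n\} \to 1$ for $n > N$, which is \eqref{eq:q2}.

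The main obstacle is the combinatorial counting step — making precise that a strictly monotone chain of reduced fractions with bounded total numerator-plus-denominator mass has only $O(S^{2/3})$ terms, and checking that the constant and the cutoff $N$ can be chosen uniformly (in particular independent of the environment). Everything else — the passage from the deterministic inclusion to the two probabilistic conclusions — is routine; I expect the authors either to reproduce the Fernández–Séverin–Slutsky style estimate in detail or to cite it and simply verify that the hypothesis $z_0 - z_R + y_0 - y_R \le g(n)$ is precisely what feeds that estimate.
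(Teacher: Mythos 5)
Your proposal follows essentially the same route as the paper: telescope $z_0-z_R+y_0-y_R$ over the chain of critical penalties, use Lemma~\ref{lem:critical} to identify each increment pair $(a_k,b_k)$ with a distinct rational $\beta_{k+1}=a_k/b_k$, and bound the number of distinct reduced fractions whose total numerator-plus-denominator mass is at most $g(n)$ by $O\big(g(n)^{2/3}\big)$ via the Euler-totient counting of \cite{Fern-Sep-Slut} (since $\sum_{k\le M}k\varphi(k)\asymp M^3$ forces $M\lesssim g(n)^{1/3}$ and hence at most $\sum_{k\le M}\varphi(k)\lesssim g(n)^{2/3}$ fractions), with parts (1) and (2) following exactly as you state; the constant and threshold are automatically non-random because this count is purely deterministic and $g$ is deterministic. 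One small caution: your parenthetical justification phrased in terms of denominators alone summing to at most $S$ is not sufficient as stated (the fractions $k/1$ would give $\Theta(S)$ many), so the count must use the full budget $\sum_k(a_k+b_k)\le g(n)$, which is precisely how the paper (and \cite{Fern-Sep-Slut}) argue.
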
	
	
\begin{proof}
Statements \eqref{eq:q1}, \eqref{eq:q2} are immediate corollaries of \eqref{eq:epiphany} which we now show. Fix an environment $\omega \in A_n$. Then we have that 
\[
z_0 - z_R + y_0 - y_R = \sum_{i=0}^{R_{m,n} - 1} \{(z_{\beta_{i+1}}-z_{\beta_i}) + (y_{\beta_{i+1}} - y_{\beta_{i}}) \}\le g(n).  
\]
	The sum above has as terms the numerators and denominators of the critical penalties (see Lemma \ref{lem:critical}). 
	
	Each critical penalty is a distinct rational number and it corresponds to a change of optimality region. The bound $g(n)$ is independent of the environment,  so we can obtain an upper bound on the number of regions that is independent of the environment, if we maximize the number of terms that appear in the sum. 
	
	Since the terms in the sum are integers, the maximal number of terms is the maximal number of integers $k$ that can be added so that the bound $g(n)$ is not exceeded. Those integers $k$ need not be distinct but they need to able to be written as a sum of integers $a, b$, $k = a+b$ so that $a/b$ are different. This is because the ratio $a/b$ corresponds to critical penalties and those are distinct. Take each successive integer $k$ and compute the number of irreducible fractions $a/b$ so that $a+b=k$.
	
	 The number of irreducible fractions satisfying this is $\varphi(k)$, where $\varphi$ is Euler's totient function \cite{Apostol}. 
	 The number of distinct values $k$ that can be used is $M_{\max}$, which must satisfy 
	  \[
	 \sum_{k= 1}^{M_{\max}} k \varphi(k) \le g(n) <  \sum_{k = 1}^{M_{\max}+1} k \varphi(k).
	 \]
	  These inequalities imply that 
	  $M_{\max}$ will be bounded above, up to a lower order term, by $cg(n)^{1/3}$. 
	 This follows by the asymptotics of $\varphi$ for large arguments, 
	 and we direct the reader to the proof of Theorem 5 in \cite{Fern-Sep-Slut} for the details. 
	 The bound on $M_{\max}$ is true for all $n > N_1$ large enough.  
	 Then an upper bound for the number of admissible pairs $(a, b)$ 
	 (and therefore for the maximal number of regions) is 
	 \[\sum_{k=1}^{M_{\max}} \phi(k) \le  c_1 M_{\max}^2 \le C g(n)^{2/3}.\]
	 This last estimate is again the result of an analytic number theory formula 
	 (see \cite{Apostol}) which also works for $n > N_2$ large enough. 
	 So both deterministic bounds hold for all 
	 $n > N = N_1 \vee N_2$.  
\end{proof}	
	
	The difficulty with the alignment model is the correlated environment. Therefore, 
	the soft techniques below try to avoid precisely this issue. The same techniques work for 
	the BLIP model and give identical bounds, but the exact solvability of that model often allows 
	sharper results.
	
	Our strategy is to construct a path with a score that is near-optimal under any penalty $\beta$ 
	and which attempts to minimize as much as possible the number of vertical steps. 
	This will be important for the lower bound for the passage time under penalty $\beta_R$,
	 where we know that the optimal path takes no vertical steps. We present the construction 
	 and results for alignment model, but re-emphasize that they hold for both.
	
	\subsection{Construction of the path} \label{ssec:3.1}
	Fix an environment $\omega$ on $\N^2$, defined by two infinite words $\wx$, $\wy$,
	where each letter is chosen uniformly at random. $\omega_{i,j}$ is defined according to 
	\eqref{eq:alenv}.
	
	 Consider the following strategy $(S)$ to create a path $\pi_S$: 
	 \begin{enumerate} 
		 \item For some appropriate constants $c_1$ and $c_2$ (to be determined later), move with $e_1+e_2$ steps from $0$ up to a fixed point 
		 \begin{align}
			 \label{eq:DefUn}
			 u_n(a) = \begin{cases}
			 \vspace{0.1in}
			 \left(\fl{\sqrt{c_1 n \log n}}, \fl{\sqrt{c_1 n \log n}}\right), \quad & \text{if } a \le \nicefrac12,\\
			 \left(\fl{\frac{1}{\abs\abc -1} xn^a} + \fl{ \sqrt{c_2 n\log n}}, \fl{\frac{1}{\abs\abc -1} xn^a } + \fl{\sqrt{c_2 n\log n}}\right), \quad & \text{if } a > \nicefrac12.
			 \end{cases}
		\end{align}
		\item Now, from $u_n(a)$ construct the path as follows
	 \begin{enumerate}
		\item If the path is on site $(i,j)$ with $j < n$ and $\om_{i+1,j+1} = 1$ 
			then move diagonally with an $e_1+e_2$ step, and now the path is on site $(i+1, j+1)$.
		\item If the path is on site $(i,j)$ with $i < \fl{\abs{\abc}n - xn^a}$ and $\om_{i+1,j+1} = 0$ 
			then move horizontally with an $e_1$ step, and now the path is on site $(i+1, j)$.
		\item If $j = n$ or $i = \fl{\abs{\abc}n - xn^a}$, move to 
		$(\fl{\abs{\abc}n - xn^a}, n)$.
	\end{enumerate}
\end{enumerate}

From this description it is not clear whether we can enforce the condition that no vertical steps will be taken by $\pi_S$. However, this will happen for eventually all $n$, by choosing constants $c_1, c_2$ appropriately. 
Consider an infinite path $\bar \pi_S$ that moves according to strategy $(S)$ but without the restrictions 
$i < \fl{\abs{\abc}n - xn^a}$ for (3)-(b) and without step (3)-(c). 

Let $Y_j$ be the random variables that give the amount of horizontal steps path $\bar \pi_S$ takes at level $y = j + u_n(a)\cdot e_2$,
\be \label{eq:Yincrements0}
Y_j = \abs{\{ i \in \N: (i,j + u_n(a)\cdot e_2) \in \bar \pi_S \}}.
\ee
Because $\bar \pi_S$ does not have a target endpoint, we have
\be \label{eq:Yincrements}
Y_j  \sim \text{Geom}\big( \nicefrac1{\abs{\abc}}\big), \quad \P\{ Y_j = \ell\} =\nicefrac{1}{\abs\abc}(1 -  \nicefrac{1}{\abs\abc})^{\ell -1}.
\ee
By construction, the $Y_j$ are i.i.d.~ with mean $\abs\abc$.

Path $\bar \pi_S$ coincides with $\pi_S$ up until the point that $\bar \pi_S$ hits either the north or east boundary of the rectangle $[0, \fl{\abs{\abc}n - xn^a}] \times [0, n]$. When $\bar \pi_S$ touches the north boundary first, we can conclude that $\pi_S$ has no vertical steps up to that point.
We will estimate precisely this probability, using the following moderate deviations lemma \cite{Cramer38}.
\begin{lemma}
	\label{lem:MD}
	Let $\rb{X_N}_{N\in\N}$ an i.i.d. sequence of random variables with exponential moments. If  $N\lambda_N^2 \to \infty$ and $ N\lambda_N^3 \to 0$ then
	\be \label{eq:mdas}
	\P\Big\{ \Big|\frac1N\, \sum_{i=1}^N X_i - \E(X_1)\Big| > \lambda_N \Big\} \sim \frac{2}{\sqrt{2\pi N \lambda_N^2}}e^{-N\lambda^2_N/2}.
	\ee
\end{lemma}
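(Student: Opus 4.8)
The plan is to follow Cramér's original exponential-tilting argument for moderate deviations; the statement is essentially that of \cite{Cramer38}, but we sketch the proof for completeness. First, replacing $X_i$ by $X_i-\E(X_1)$ we may assume $\E(X_1)=0$, and since $\{\,|\tfrac1N\sum_{i\le N} X_i|>\lambda_N\,\}$ is the disjoint union of $\{\tfrac1N\sum_{i\le N} X_i>\lambda_N\}$ and the same event for $-X_i$, while $-X_1$ has the same variance and exponential moments, it suffices to prove the one-sided estimate $\P\{S_N>N\lambda_N\}\sim\frac{\sigma}{\sqrt{2\pi N\lambda_N^2}}\,e^{-N\lambda_N^2/(2\sigma^2)}$, where $S_N=\sum_{i=1}^N X_i$ and $\sigma^2=\Var(X_1)$; summing the two one-sided estimates then produces the factor $2$ (the form in the statement corresponds to the normalisation $\sigma=1$). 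Note the hypotheses force $\lambda_N\to0$ and $\lambda_N\sqrt N\to\infty$.

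Let $\psi(t)=\log\E(e^{tX_1})$, which is finite and smooth near $0$, with $\psi(0)=\psi'(0)=0$ and $\psi''(0)=\sigma^2$. For each large $N$ choose the tilt $t_N$ solving $\psi'(t_N)=\lambda_N$; since $\psi'$ is strictly increasing with $\psi'(0)=0$, we get $t_N\to0$ and $t_N=\lambda_N/\sigma^2+O(\lambda_N^2)$. Define the tilted law $\tilde\P_N$ by $d\tilde\P_N/d\P\propto e^{t_N S_N}$; under $\tilde\P_N$ the $X_i$ stay i.i.d.\ with mean $\lambda_N$ and variance $\tilde\sigma_N^2=\psi''(t_N)\to\sigma^2$. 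Setting $T_N=S_N-N\lambda_N$, the change of measure gives
\[
\P\{S_N>N\lambda_N\}=e^{N(\psi(t_N)-t_N\lambda_N)}\,\tilde\E_N\!\big[e^{-t_N T_N}\,\one\{T_N>0\}\big].
\]
A Taylor expansion yields $\psi(t_N)-t_N\lambda_N=-\tfrac12\sigma^2 t_N^2+O(t_N^3)=-\lambda_N^2/(2\sigma^2)+O(\lambda_N^3)$, so the prefactor is $e^{-N\lambda_N^2/(2\sigma^2)}\,e^{O(N\lambda_N^3)}=e^{-N\lambda_N^2/(2\sigma^2)}(1+o(1))$, precisely because $N\lambda_N^3\to0$.

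It then remains to show $\tilde\E_N[e^{-t_N T_N}\one\{T_N>0\}]\sim\phi(0)/(t_N\tilde\sigma_N\sqrt N)$, where $\phi$ is the standard Gaussian density; since $t_N\tilde\sigma_N\sqrt N\sim\lambda_N\sqrt N/\sigma$ this equals $\sigma/\sqrt{2\pi N\lambda_N^2}$ up to $1+o(1)$, and combined with the prefactor this gives the lemma. Under $\tilde\P_N$ the variable $T_N/(\tilde\sigma_N\sqrt N)$ obeys a local central limit theorem, uniformly in $N$ (the $X_i$ have uniformly bounded exponential moments near $0$), so $\tilde\P_N(T_N\in dz)\approx(\tilde\sigma_N\sqrt N)^{-1}\phi\big(z/(\tilde\sigma_N\sqrt N)\big)\,dz$ for $z$ in any range $o(\tilde\sigma_N\sqrt N)$; integrating $e^{-t_N z}$ against this, the exponential decay at rate $t_N$ concentrates the mass on $z=O(1/t_N)=o(\tilde\sigma_N\sqrt N)$, where $\phi(z/(\tilde\sigma_N\sqrt N))\to\phi(0)$, and $\int_0^\infty e^{-t_N z}\,dz=1/t_N$, giving the claim. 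In the lattice case (which covers the geometric application to the $Y_j$) the integral is replaced by a geometric sum of ratio $e^{-t_N}\to1$, whose value is again $(1+o(1))/t_N$, so the constant is unchanged.

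The main obstacle is this last step: one needs the local limit theorem under the family of tilted measures $\tilde\P_N$ with error terms uniform enough that the contribution of $T_N$ bounded away from $0$ — the region where $e^{-t_N T_N}$ is still not small, since $t_N\to0$ — is genuinely negligible compared to $1/(t_N\tilde\sigma_N\sqrt N)$. This is exactly where the two hypotheses are used in tandem: $N\lambda_N^3\to0$ controls the cubic error in $\psi$ and hence pins down the Gaussian prefactor, while $N\lambda_N^2\to\infty$, i.e.\ $t_N\sqrt N\to\infty$, is what makes the tilted tail a genuinely \emph{local} quantity of order $1/(t_N\sqrt N)$ rather than an $O(1)$ Gaussian tail.
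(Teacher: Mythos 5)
The paper does not actually prove this lemma: it is quoted as Cramér's classical moderate-deviation theorem, with the citation to \cite{Cramer38} standing in for the proof. Your exponential-tilting sketch is precisely the classical argument behind that citation, and it is correct in all essentials: the change-of-measure identity, the Taylor expansion of $\psi(t_N)-t_N\lambda_N$ showing exactly where $N\lambda_N^3\to0$ is used, and the reduction of the tilted expectation to a local quantity of order $1/(t_N\tilde\sigma_N\sqrt N)$, which is where $N\lambda_N^2\to\infty$ (i.e.\ $t_N\sqrt N\to\infty$) enters. You also correctly identify the only real technical work, namely a local CLT (or Berry--Esseen plus summation/integration by parts) holding uniformly over the family of tilted laws; since $t_N\to0$ and the moment generating function is finite in a fixed neighbourhood of the origin, this uniformity is standard, and your treatment of the lattice case (relevant for the geometric variables $Y_j$ to which the lemma is applied) is the right one.

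One remark of yours is worth keeping visible: for general $X_1$ the correct asymptotic is $\frac{2\sigma}{\sqrt{2\pi N\lambda_N^2}}\,e^{-N\lambda_N^2/(2\sigma^2)}$ with $\sigma^2=\Var(X_1)$, so the lemma as stated implicitly assumes unit variance. This is not a defect of your proof but of the statement; it only affects constants (and hence the admissible choices of $c_1,c_2$) when the lemma is invoked in Section \ref{sec:ORmi} for geometric variables, whose variance is $\abs\abc(\abs\abc-1)\neq1$, since there only a summable upper bound on the deviation probability is needed.
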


From the equality of events 
\be \label{eq:eventeq}
\{ \bar \pi_S \text{ exits from the north boundary} \} = \Big\{ \sum_{j=1}^{n - u_n(a)\cdot e_2 }Y_j  \le \fl{\abs{\abc} n- xn^a} - u_n(a)\cdot e_1\Big\},
\ee
we estimate for $a \le 1/2$ and for $n$ sufficiently large for the asymptotics in \eqref{eq:mdas} to be accurate,
\begin{align}
			 \P\Big\{ \sum_{j=1}^{n - u_n(a)\cdot e_2 }Y_j &\le \fl{\abs{\abc}n - xn^a} - \fl{\sqrt{c_1 n \log n}}\Big\} \label{eq:grr}\\
				&\ge \P\bigg\{ \sum_{j=1}^{n - \fl{\sqrt{c_1 n \log n}} }(Y_j - \abs{\abc}) \le  - xn^a + (\abs{\abc} - 1)\sqrt{c_1n \log n} -3 \bigg\}\notag \\
				&\ge 1 - c_0  \frac{1}{\sqrt{\log n} }n^{-c_1 (\abs{\abc}-1)^2  /4}. \notag
		\end{align}
For the last inequality, we used Lemma \ref{lem:MD} for 
\[ N = n - \fl{\sqrt{c_1 n \log n}} \text{ and }  \lambda_N = (\abs\abc -1)\sqrt{c_1} \sqrt{\frac{\log n}{n}} + O(n^{\alpha - 1}).\]
The constant $c_0$ only depends on $\abs\abc$ which is assumed to be strictly larger than 1. Choose $c_1 > \frac{2}{(\abs\abc -1)^2}$ so that the probabilities of the event $\{ \bar \pi_S \text{ exits from the east boundary} \}$ are summable in $n$. Then by the Borel-Cantelli lemma, we can find an $M = M(\omega)$ so that for all $n> M$ path $\bar\pi_S$ hits the north boundary first. 

The situation for $a > 1/2$ is similar. Starting from \eqref{eq:grr}, we have 
\begin{align}
			 \P\Big\{ \sum_{j=1}^{n - u_n(a)\cdot e_2 }Y_j &\le \fl{\abs{\abc}n - xn^a} - \fl{\frac{1}{\abs\abc -1}xn^a}- \fl{\sqrt{c_2 n \log n}}\Big\} \label{eq:grr2}\\
				&\ge \P\bigg\{ \sum_{j=1}^{n - \fl{\frac{1}{\abs\abc -1}  xn^a} -\fl{\sqrt{c_2 n \log n}} }(Y_j - \abs{\abc}) \le  (\abs{\abc} - 1)\sqrt{c_2n \log n} -3 \bigg\}.\notag 
		\end{align}
 Then the proof goes as for the previous case, and again it suffices that  $c_2 > \frac{2}{(\abs\abc -1)^2}$. 
 
 From the definition of $\pi_S$ and the above discussion, we have shown the following: 

 \begin{lemma} \label{lem:pis}For $\P$- a.e.~$\om$ there exists $M=M(\omega)$ so that for all $n > M(\omega)$, path $\pi_S$ exits from the north boundary of the rectangle $[0, \abs\abc n - xn^a] \times [0,n]$. In that case, 
 \begin{enumerate}
 \item it has no vertical gaps until the point of exit, 
 \item the number of horizontal gaps it has is  $(\abs\abc -1)n - xn^a$ (the minimal possible), and 
 \item it collects $n - u_n(a) \cdot e_2 + \sum_{k=1}^{u_n(a)\cdot e_2} \om_{k,k}$ positive weight. 
 \end{enumerate}
 Since $\pi_S$ has the smallest number of gaps possible, it can be optimal under any penalty $\beta$.
 \end{lemma}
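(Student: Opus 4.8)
The plan is to assemble the statement from the construction of $\pi_S$ and the estimates already carried out just above; most of the genuine work has been done, and what remains is to package it and to read off properties (1)--(3) from strategy $(S)$.

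\textbf{Almost-sure north exit.} For the first assertion, recall from \eqref{eq:eventeq} that $\{\bar\pi_S$ exits through the north boundary$\}$ is exactly the event that the sum of the i.i.d.\ $\mathrm{Geom}(1/\abs\abc)$ increments $Y_j$ of \eqref{eq:Yincrements} stays below $\fl{\abs\abc n - xn^a} - u_n(a)\cdot e_1$. Applying the moderate deviations Lemma \ref{lem:MD} with the $N$ and $\lambda_N$ displayed after \eqref{eq:grr} when $a\le \nicefrac12$, and with the analogue after \eqref{eq:grr2} when $a>\nicefrac12$, bounds the probability of the complementary event ``$\bar\pi_S$ exits east'' by $c_0(\log n)^{-1/2} n^{-c_i(\abs\abc-1)^2/4}$. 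Since $c_1,c_2>2/(\abs\abc-1)^2$ were chosen in \eqref{eq:DefUn}, these are summable in $n$, so Borel--Cantelli produces, for $\P$-a.e.\ $\omega$, an $M(\omega)$ beyond which $\bar\pi_S$ leaves the rectangle through the north side; and $\pi_S$ agrees with $\bar\pi_S$ up to the exit, so the same holds for $\pi_S$.

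\textbf{Verifying (1)--(3).} Now fix $n>M(\omega)$, so $\pi_S$ exits north. For (1): inspecting $(S)$, rules (3)-(a) and (3)-(b) only ever append a diagonal or a horizontal step and are used precisely while $j<n$; the only rule that could move vertically, (3)-(c), is triggered here by $j=n$, after which $\pi_S$ reaches $(\fl{\abs\abc n - xn^a},n)$ using horizontal steps only. Hence $\pi_S$ has no vertical gap. For (2): a path from $(0,0)$ to $(m,n)$ with no vertical step has exactly $n$ diagonal steps, so by \eqref{eq:path:steps} it has $m-n = (\abs\abc-1)n - xn^a$ gaps (up to the floor); and this is minimal, since any path to $(m,n)$ with $m\ge n$ has at most $n$ diagonal steps and therefore at least $m-n$ gaps. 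For (3): the matches of $\pi_S$ are exactly the diagonal steps landing on $\{\omega=1\}$; in the initial stretch $0\to u_n(a)$ these are the $k\le u_n(a)\cdot e_2$ with $\omega_{k,k}=1$, contributing $\sum_{k=1}^{u_n(a)\cdot e_2}\omega_{k,k}$, and in the stretch $u_n(a)\to(\cdot,n)$ every one of the $n - u_n(a)\cdot e_2$ diagonal steps is a match by rule (3)-(a); summing gives the claimed positive weight.

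\textbf{Optimality under any penalty.} Since $m>n$ here, the minimal gap count $m-n$ is attained only by paths with no vertical step and exactly $n$ diagonal steps --- precisely the signature forced on any MGM path for large $\beta$, in particular at the largest critical penalty $\beta_\infty$. Thus $\pi_S$, sharing this structure, is a legitimate candidate: the passage time under penalty $\beta$ is at least $w_{0,\beta}(\pi_S)$ for every $\beta$, and its gap count cannot be beaten, which is exactly what is needed for the lower bound at $\beta_\infty$. The only delicate point in the whole argument is the one already resolved before the statement, namely the choice of $c_1,c_2$ large enough to make the east-exit probabilities summable while keeping $u_n(a)$ of the smaller order $\sqrt{n\log n}$ (resp.\ $n^a$); the rest is bookkeeping across the regimes $a\le\nicefrac12$ and $a>\nicefrac12$ and a careful reading of $(S)$.
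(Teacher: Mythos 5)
Your proposal is correct and takes essentially the same route as the paper, whose proof of Lemma \ref{lem:pis} is precisely the discussion preceding it: the north-exit claim via the event identity \eqref{eq:eventeq}, the moderate-deviation estimate of Lemma \ref{lem:MD} for the geometric increments $Y_j$ with $c_1,c_2>2/(\abs\abc-1)^2$ and Borel--Cantelli, and then items (1)--(3) and the minimal-gap/optimality remark read off directly from the construction of $\pi_S$. The only caveat, inherited verbatim from the paper, is the bookkeeping mismatch between the displayed tail exponent $c_i(\abs\abc-1)^2/4$ and the stated threshold for summability, which does not affect the argument's substance.
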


	\section{The independent model}
	\label{sec:ORBLIP}
	In this section we prove results about the independent model. We begin with a coupling between the longest common subsequence in the independent model, with 
	the corner growth model in an i.i.d.~$\text{Geom}(1-p)$ environment. This is achieved via the following identity.
	 Recall that
	$\cgs mn$ denotes the last passage time 
	in an $m\times n$ rectangle, with admissible $e_1$ or $e_2$ steps only, under potential \eqref{eq:potcgm}.	 
	\begin{align}
	\label{eq:disteq}
		\P\big\{ G^{(0)}_{m,n} \le m - N \big\} = \P\set{ \cgs{n - m + N}N \le n + N -1}.		
	\end{align}
	The result follows from the arguments in \cite{Geo-2010}, and we briefly present the main idea.
	
	The \emph{discrete totally asymmetric simple exclusion process} (DTASEP) with backward updating is an interacting particle system of left-finite particle configuration on the integer lattice, i.e. such that sites to the left of some threshold are empty  (see  Figure \ref{fig:pic2}). Label the particles from left to right and denote the position of the $j^\text{th}$ particle at time $\ell\in\N$ by $\eta_j(\ell)$. At every discrete time step $\ell \in \N$ each particle independently attempts to jump one step to the left with probability $q=1-p$. Particle $i$ performs the jump if either 
\begin{enumerate} 
\item the target site was unoccupied by particle $i-1$ at time $\ell-1$ 
or,
\item the target site was occupied by particle $i-1$, but it also performs a jump at time $\ell$. 
\end{enumerate} 
In words, particles are forbidden to jump to occupied sites and we update  from left to right. Start DTASEP with the \emph{step initial condition} $\eta_i(0) = i$ so that initially the $i$-th particle is at position $i$. Let $\tau_{i,j}$ be the time it takes particle $j$ to jump $i$ times:
\[
	\tau_{i,j} =  \inf\{ \ell \ge 0 : \eta_j(\ell)\le j-i \} .
\]
Then the following recursive equation holds 
\[
\tau_{i,j} =\tau_{i, j-1}\vee (\tau_{i-1,j} +1) + \tilde\zeta_{i,j}.
\]
where the $\tilde\zeta_{i,j}$ are independent Geometric variables with parameter $q=1-p$, supported on $\N_0$.

\begin{figure}
\begin{center}
\begin{tikzpicture}[>=latex, scale=0.7]
%

\draw[color=blue, line width=2pt](6.5-7, 7)--(5.5-6, 6)--(5.5-4,4)--(4.5-3, 3)--(4.5-1,1)--(3.5, 0);


\foreach  \x in {1,..., 7}
{
\draw(\x, 0)--(\x-7.5, 7.5);
\draw(\x-0.5, -0.5)node{$\x$};
\draw(-\x, \x)--(7.5-\x,\x);
\draw(-\x-0.8, \x)node{$t = \x$};
}

\draw[<->, gray](-8,8)--(0,0)--(8,0);


\draw[color=nicosred](3.5-0.5, 0.5)node{$\otimes$};
\draw[color=nicosred](5.5-0.5, 0.5)node{$\otimes$};

\draw[color=nicosred](2.5-1.5, 1.5)node{$\otimes$};
\draw[color=nicosred](3.5-1.5, 1.5)node{$\otimes$};
\draw[color=nicosred](6.5-1.5, 1.5)node{$\otimes$};

\draw[color=nicosred](1.5-2.5, 2.5)node{$\otimes$};
\draw[color=nicosred](2.5-2.5, 2.5)node{$\otimes$};

\draw[color=nicosred](3.5-3.5, 3.5)node{$\otimes$};

\draw[color=nicosred](0.5-4.5, 4.5)node{$\otimes$};
\draw[color=nicosred](3.5-4.5, 4.5)node{$\otimes$};
\draw[color=nicosred](6.5-4.5, 4.5)node{$\otimes$};

\draw[color=nicosred](2.5-5.5, 5.5)node{$\otimes$};
\draw[color=nicosred](6.5-5.5, 5.5)node{$\otimes$};

\draw[color=nicosred](-6, 6.5)node{$\otimes$};
\draw[color=nicosred](1.5-6.5, 6.5)node{$\otimes$};
\draw[color=nicosred](4.5-6.5, 6.5)node{$\otimes$};


\foreach \x in {0,...,6}{
\shade[ball color=red](\x+0.5, 0)circle(1.3mm);
}

\foreach \x in {0,...,2, 4,5, 6}{
\shade[ball color=red](\x+0.5-1, 1)circle(1.3mm);
}

\foreach \x in {0, 1, 3, 4,5}{
\shade[ball color=red](\x+0.5-2, 2)circle(1.3mm);
}

\foreach \x in {0, 2, 3, 4,5}{
\shade[ball color=red](\x+0.5-3, 3)circle(1.3mm);
}

\foreach \x in {0, 2, 4,5, 6}{
\shade[ball color=red](\x+0.5-4, 4)circle(1.3mm);
}

\foreach \x in {1, 2, 4,5}{
\shade[ball color=red](\x+0.5-5, 5)circle(1.3mm);
}

\foreach \x in {1, 3, 4,5}{
\shade[ball color=red](\x+0.5-6, 6)circle(1.3mm);
}

\foreach \x in {2, 3, 5, 6}{
\shade[ball color=red](\x+0.5-7, 7)circle(1.3mm);
}

\end{tikzpicture}
\caption{Space-time realisation of DTASEP (Graphical construction). Particles move to the left, according to exclusion rules (1) and (2). Symbols $\otimes$ denote Bernoulli($p$) weights 1, and particle underneath an $\otimes$ symbol cannot jump during that time, i.e. particles jump with probability $1-p=q$ as long as the exclusion rule is not violated. The trajectory of particle 4 is highlighted for reference.}
\label{fig:pic2}
\end{center}
\end{figure}
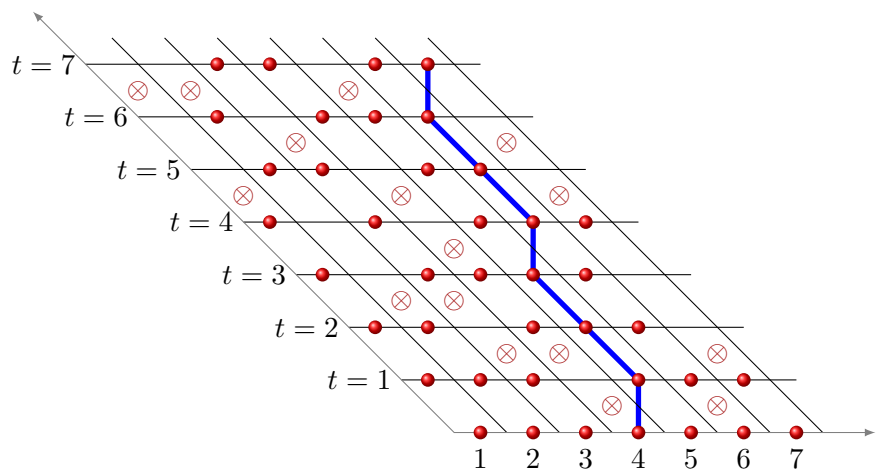
%
By setting $\zeta_{i,j} = \tilde \zeta_{i,j} +1 \sim \text{Geom}(1-p) \in \{ 1, 2, \ldots \}$,  the $\tau_{i,j}$ 
can be coupled with the last passage time in the corner growth model (cf. \cite{Geo-2010}, Lemma 5.1), 
giving the equality in distribution
\be \label{eq:bliptoG}
\tau_{i,j} \stackrel{(\mathrm d)}{=} \cgs ij - j +1.
\ee

We embed DTASEP in the two-dimensional lattice $\Z \times \N_+$, 
using its graphical construction as follows: Let $\set{b_{k,\ell}\colon (k,\ell)\in \Z \times \N_+}$ be a field of i.i.d. Bernoulli$(q)$ random variables and assign to each site $(k,\ell)$ the random weight $b_{k,\ell}$.
Particles are placed initially on $\N_+ \times\{ 0\}$, with particle $i$ at coordinate $(\eta_i(0), 0)$. 
The Bernoulli marked sites signify which particles will attempt to jump in the DTASEP process. 

\begin{figure}[h]
	\begin{center}
	\begin{tikzpicture}[>=latex, scale=0.8]

\draw[color=blue, line width=2pt](6.5, 7)--(5.5, 6)--(5.5,4)--(4.5, 3)--(4.5,1)--(3.5, 0);


\foreach  \x in {1,..., 7}
{
\draw(\x, 0)--(\x, 7.5);
\draw(\x-0.5, -0.5)node{$\x$};
\draw(0, \x)--(7.5,\x);
\draw(-0.8, \x)node{$t = \x$};
}

\draw[<->](0,8)--(0,0)--(8,0);


\draw[color=nicosred](3.5, 0.5)node{$\otimes$};
\draw[color=nicosred](5.5, 0.5)node{$\otimes$};

\draw[color=nicosred](2.5, 1.5)node{$\otimes$};
\draw[color=nicosred](3.5, 1.5)node{$\otimes$};
\draw[color=nicosred](6.5, 1.5)node{$\otimes$};

\draw[color=nicosred](1.5, 2.5)node{$\otimes$};
\draw[color=nicosred](2.5, 2.5)node{$\otimes$};

\draw[color=nicosred](3.5, 3.5)node{$\otimes$};

\draw[color=nicosred](0.5, 4.5)node{$\otimes$};
\draw[color=nicosred](3.5, 4.5)node{$\otimes$};
\draw[color=nicosred](6.5, 4.5)node{$\otimes$};

\draw[color=nicosred](2.5, 5.5)node{$\otimes$};
\draw[color=nicosred](6.5, 5.5)node{$\otimes$};

\draw[color=nicosred](0.5, 6.5)node{$\otimes$};
\draw[color=nicosred](1.5, 6.5)node{$\otimes$};
\draw[color=nicosred](4.5, 6.5)node{$\otimes$};


\foreach \x in {0,...,6}{
\shade[ball color=red](\x+0.5, 0)circle(1.3mm);
}

\foreach \x in {0,...,2, 4,5, 6}{
\shade[ball color=red](\x+0.5, 1)circle(1.3mm);
}

\foreach \x in {0, 1, 3, 4,5}{
\shade[ball color=red](\x+0.5, 2)circle(1.3mm);
}

\foreach \x in {0, 2, 3, 4,5}{
\shade[ball color=red](\x+0.5, 3)circle(1.3mm);
}

\foreach \x in {0, 2, 4,5, 6}{
\shade[ball color=red](\x+0.5, 4)circle(1.3mm);
}

\foreach \x in {1, 2, 4,5}{
\shade[ball color=red](\x+0.5, 5)circle(1.3mm);
}

\foreach \x in {1, 3, 4,5}{
\shade[ball color=red](\x+0.5, 6)circle(1.3mm);
}

\foreach \x in {2, 3, 5, 6}{
\shade[ball color=red](\x+0.5, 7)circle(1.3mm);
}
\end{tikzpicture}
		\end{center}
\caption{{The DTASEP transformed in the BLIP setting. Symbols $\otimes$ denote Bernoulli weights 1 to the north-east corner of their square. The coloured balls on each horizontal level is the realization of particles that are still in the $7\times7$ grid. At $t= 7$ there are 4 particles in the square. From this and equations \eqref{eq:bliptot}, \eqref{eq:tincr} we have that $G^{(0)}_{7, 7} = 7 - 4 = 3$.}}
\label{fig:pic8}
\end{figure}
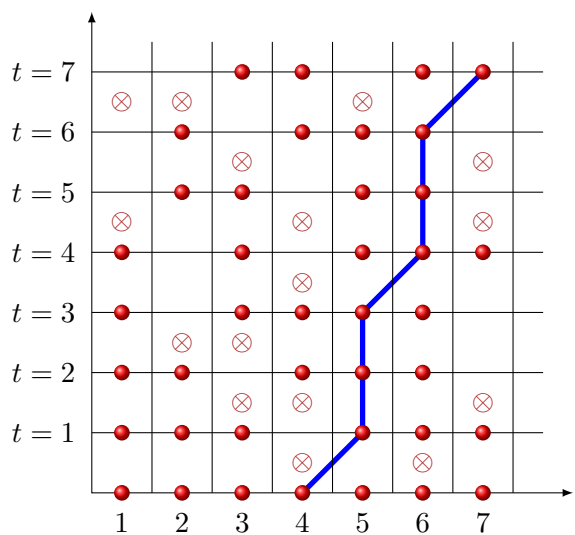

After the spatial locations in the DTASEP at time $\ell=1$ are determined, 
the particles in the graphical construction are at positions $(\eta_{i}(1), 1)$. 
We iterate this procedure for all times $\ell \in \N$. 
 
Then, the environments between graphical DTASEP and BLIP may be coupled via
\[
 1 - \om_{k,\ell} = b_{k + \ell, \ell}.
\]
In \cite{Geo-2010} the following combinatorial identity was proved:
 \be \label{eq:bliptot}
 G^{(0)}_{m, n} = m - \max\{ k: (m - n)\vee1 \le k \le m,\,\, \tau_{k + n - m, k} \le n \}.
\ee
Set 
$ k^* = \max\{ k\le m:  k\geq (m - n)\vee1 ,\,\, \tau_{k + n - m, k} \le n \}\vee 0. 
$
Then 
\be \label{eq:tincr}
\{ G^{(0)}_{m.n} \le m -N \} = \{ N \le k^*\} = \{ \tau_{N + n - m, N} \le n\},
\ee
where the last equality comes form the fact that  $\tau_{N + n - m, N}$  is an increasing random variable in $N$ For a clear pictorial explanation about the coupling, look at Figure \ref{fig:pic8}.
Finally compute 
\begin{align*}
\mathbb P\{G^{(0)}_{m,n} \le m - N\} &= \mathbb P\left\{ N \le \max\{ k: (m - n)\vee1 \le k \le m,\,\, \tau_{k + n - m, k} \le n \} \right\}, \quad \text{ by \eqref{eq:bliptot} }\\
&= \mathbb P\{ \tau_{N + n - m, N} \le n  \}, \quad \text{by \eqref{eq:tincr}} \\
&= \mathbb P\{ \cgs{N + n - m}{N} \le n + N -1\}, \quad \text{by \eqref{eq:bliptoG}}. 
\end{align*}

\subsection{Proof of Theorem \ref{thm:tightness}}

	Recall that $m_n=n/p-xn^a$ and $a\in (0,\nicefrac12]$. Our goal is to prove that the sequence of random variables $n-\bsz{n}{m_n}$ is tight. 
	The main ingredient in the proof is  identity \eqref{eq:disteq}. Set $N=\frac {nq}p - x n^a +k$. Then
	\begin{align}
		\label{eq:55}
		n-m_n+N&=n-\frac np+x n^a +\frac {nq}p - x n^a +k=k.
		\intertext{Since $N(n)$ is eventually monotone, we can invert the expression above and find $n$ in terms of $N$ for sufficiently large $n$ (and hence $N$). In particular,}
		n=n(N)& = \frac pq N + x N^a \rb{\frac pq}^{a+1} + O(N^{2a-1}).
	\end{align}
		To see this we compute
	\begin{align*}
		N(n(N)) & = \frac qp n(N) - x n(N)^a + k \\
		&= \frac qp \rb{\frac pq N + x N^a \rb{\frac pq}^{a+1} 
		+ O(N^{2a-1})} 
		- x \rb{\frac pq N + x N^a \rb{\frac pq}^{a+1} + O(N^{2a-1})}^a+k\\
		&= N+ \rb{\frac pq}^{a} xN^a - x \rb{\frac pq N}^a \rb{1 + x  N^{a-1} \rb{\frac pq}^{a} + O(N^{2a-2})}^a + O(1)\\
 		&= N+ \rb{\frac pq}^{a} xN^a
  		 - x \rb{\frac pq N}^a \rb{1 +  a x  N^{a-1} \rb{\frac pq}^{a} + O(N^{2a-2})} + O(1)\\
		 & =N+O(1).
	\end{align*}
	Therefore,  $n+N-1=\frac Nq+x  \rb{\frac pq}^{a+1} N^a + O(N^{2a-1})$. 	
	Combining \eqref{eq:disteq} and \eqref{eq:55}
	\begin{align} \allowdisplaybreaks
		\P\{ k \le n - G^{(0)}_{m_n,n}\} &= \P\{ G^{(0)}_{m_n,n} \le m_n - N \} \notag \\
		&=\P\Big\{ T_{k, N} \le \frac Nq + x  \rb{\frac pq}^{a+1} N^a + O(N^{2a-1}) \Big\}\notag \\
		&\le\P\Big\{ \max_{j:1 \le j\le k} \sum_{i = 1}^N \zeta_{i,j} \le \frac Nq + x  \rb{\frac pq}^{a+1} N^a + O(N^{2a-1}) \Big\} \notag\\
		&=\P\Big\{\sum_{i = 1}^N \zeta_{i,1} - N \E(\zeta_{11}) \le x  \rb{\frac pq}^{a+1} N^a + O(N^{2a-1}) \Big\}^k \label{eq:CLTleft}.
	\end{align}
	The results follow by first dividing by $\frac{\sqrt{p}}{q}\sqrt{N}$ and the central limit theorem, when we let $n$ (hence $N$) tend to infinity. When $a < 1/2$ the right hand side after scaling tends to 0 and the probability converges to $1/2$. When $a = 1/2$ the right-hand side in the probability converges to $x p q^{-\nicefrac12}$ and the probability to $\Phi(x p q^{-\nicefrac12})$.

	\subsection{Proof of Theorem~\ref{cor:ORhalf}}

We first show the result when $a < 1/2$. 
Using equations \eqref{eq:RnGbound} from Remark \ref{rmk:RnGbound} and \eqref{eq:CLTleft} from the proof of Theorem \ref{thm:tightness},  we have 
\begin{align}
\bP\{ k &< \indreg{\frac{n}{p} -xn^a}n \}\le \bP\{ k < n - G^{(0)}_{\frac{n}{p} -xn^a, n} \} \notag \\
&\phantom{xxxxxxx}=  \left(\bP\Big\{ \frac{\sum_{i = 1}^{N} \zeta_{i,1} - \E(\zeta_{i,1}) N}{\sqrt{\textrm{Var}(\zeta_{i,1})N}} < C_1N^{a-1/2} \Big\}\right)^k \label{eq:aless}
\end{align}
for $C_1$ large enough. As in the proof of Theorem \ref{thm:tightness} we have $N=\frac {nq}p - x n^a +k$ and let $\Phi$ denote the cumulative distribution function of the standard normal distribution. Fix a tolerance $\delta > 0$ satisfying  $\Phi(\delta) + \delta < 1$ and let $n_1(\delta)$ large enough so that  $C_1N^{a - 1/2} < \delta$ 
for all $n > n_1(\delta)$.  Applying the Berry-Esseen theorem to the last line of the last display,
\be \label{eq:sum:bound}
\bP\{ k \le \indreg{\frac{n}{p} -xn^a}n \} \le \Big(\Phi(\delta) + \frac{C}{\sqrt n}\Big)^k \le  \rb{\Phi(\delta) + \delta}^k, \quad \text{for all $n > n_2(\delta)$.}
\ee 
For $n\geq n_0(\delta)=n_1(\delta)\vee n_2(\delta)$ the right hand side of \eqref{eq:sum:bound} is uniformly summable in $k$. Moreover, by \eqref{eq:sum:bound} and the reverse Fatou's Lemma we compute 
\begin{align*}
\varlimsup_{n \to \infty} \E\ab{\indreg{\frac{n}{p} -xn^a}n}  &= \varlimsup_{n \to \infty} \sum_{k =0}^{\infty} \bP\{ k \le \indreg{\frac{n}{p} -xn^a}n \} \\
&\le   \sum_{k =0}^{\infty} \varlimsup_{n \to \infty} \bP\{ k < n - \bsz {\frac{n}{p} -xn^a}{n} \}  \le \sum_{k =0}^{\infty}  2^{-k} = 2, 
\end{align*}
where the penultimate inequality follows from \eqref{eq:RnGbound} and the last from Theorem \ref{thm:tightness}.

The case $a = \nicefrac12$ is slightly more delicate, but the ideas are exactly the same. As before, 
\be\label{eq:ais}
\bP\{ k < \indreg{\frac{n}{p} -x\sqrt{n}}n \}\le  \left(\bP\Big\{ \frac{\sum_{i = 1}^{N} \zeta_{i} - \E(\zeta_1) N}{\sqrt{\textrm{Var}(\zeta_1)N}} < x\frac{p}{\sqrt{q}} +C_0N^{-1/2} \Big\}\right)^k.
\ee
The right-hand side converges to $(\Phi(xpq^{-\nicefrac12}))^k$ and with the same arguments as before,
\[
\varlimsup_{n \to \infty} \E\ab{\indreg{\frac{n}{p} -xn^a}{n}} \le \frac1{1- \Phi(xpq^{-\nicefrac12})}. 
\]

\subsection{Proof of Theorem \ref{thm:iid:regions:ub} }
	When $a \le 1/2$ the result follows from equations \eqref{eq:aless}, \eqref{eq:ais}. For $a \in (\nicefrac12, \nicefrac34]$
	\begin{align*}
		\varlimsup_{n\to \infty}\P\Big\{ \Big(\frac{(px)^2}{4(1-p)} &+ \e\Big)n^{2a-1} \le \indreg{p^{-1}n - xn^a}n  \Big\} \\ 
			&\le \varlimsup_{n\to \infty}\P\Big\{ \Big(\frac{(px)^2}{4(1-p)} + \e\Big)n^{2a-1} \le n - G^{(0)}_{p^{-1}n - xn^a, n}\Big\}=0.
	\end{align*}
	The last inequality follows from \eqref{eq:RnGbound} and the last equality is from \eqref{eq:geoagain}. This gives the second part of the statement.
	
		When $a \in\rb{\nicefrac34,1}$ we can obtain a sharper 
		bound using Lemma \ref{lem:future}.
		
		From the proof of Lemma \ref{lem:naivebound} and Lemma \ref{lem:pis}
		we can find a constant $C_1$ 
		such that  
		$n - G^{(\beta_R)}_{p^{-1}n - xn^a, n} = n -z_R < C_1 n^a$ 
		in probability, as $n$ grows. Therefore, with probability tending to 1 as $n$ grows,
		\be \label{eq:11}
			z_0 -z_R < C_1n^a.
		\ee
	Moreover, since the the number of vertical steps at $\beta =0$
		cannot exceed $ n - G^{(0)}_{p^{-1}n - xn^a, n}$, \eqref{eq:geoagain} gives that with probability tending to 1
		\be\label{eq:12}
			 y_0 -y_R \le  n - G^{(0)}_{p^{-1}n - xn^a, n} <  C_2 n^{2a-1}. 
		\ee 
		Equations \eqref{eq:11}, \eqref{eq:12} now yield a constant $C$ such that  
		\be
			\lim_{n \to \infty}\P\{ z_0 - z_R + y_0 - y_R < C n^a \} = 1. 
		\ee
		Let $A_n$ the event in the probability above. On $A_n$, 
		$
		\sum_{i = 0}^{R-1}\left\{(z_i - z_{i+1}) + (y_i - y_{i+1})\right\} < Cn^a.
		$
		Now we are in a position to use Lemma \ref{lem:future} and finish the proof.
	
	\subsection{Proof of Theorem \ref{thm:TW} (Edge fluctuations for the independent model)}
	\label{sec:flucs}

	We will once more use \eqref{eq:disteq}.
	 Recall that

		\[ x = \frac{2}{\sqrt p}\left(\frac{q}{p}\right)^a\quad\text{ and }\quad y = s\frac{\sqrt{p}}{q}\left(\frac{p}{q}\right)^{\frac{1+a}{3}},\,\,\, s \in \R.\]
We further define an auxiliary parameter $N$ that will go to $\infty$ when $n$ goes to infinity. 
	\be\label{eq:nmono}
	N = N(n)= \frac{q}{p}n - xn^a - yn^{\frac{2-a}{3}} + c_n,
	\ee
	where $c_n$ is given by 
	\be \label{eq:cn}
	c_n = 
		\begin{cases}
			\left(\frac{q}{p}\right)^{2a-1}n^{2a-1}, &1/2<a <2/3, \\
			\left(\frac{q}{p}\right)^{2a-1}n^{2a-1} - (2a-1) x\left(\frac{q}{p}\right)^{2a-2}n^{3a-2}, &2/3\le a <5/7. \\
		\end{cases}
	\ee 
	Note that with $m_n = \frac{1}{p}n - xn^a - yn^{\frac{2-a}{3}}$ we have the relation 
	\be\label{eq:mminusn}
	m_n - n = N - c_n.
	\ee 
	Our goal now is to change $n$ to $N$ and compute $m_n, n, c_n$ in terms of $N$, similarly to the proof of Theorem \ref{thm:tightness}.
	
	\begin{enumerate}
	\item{\bf Step 1: $m_n - n$ and $c_n$ as a function of $N$:} Start from \eqref{eq:nmono} and raise it to 
	the power $2a - 1$. Then, apply Taylor's theorem to obtain  
	\begin{align*}
		N^{2a-1}
		& = \rb{\frac qp\,n}^{2a-1}\rb{ 1-(2a-1)\frac{px}q\, n^{a-1} +O\rb{n^{-\frac{1+a}3}} } 
		= c_n+O(n^{\frac{5a-4}3}).
	\end{align*}
	Note that the equation above holds, irrespective of the value of $a$, as long as 
	$a < 5/7$; for $a\in [0,\nicefrac57)$ the exponent $\frac{5a-4}3<0$,
	so 
	\[
	c_n = N^{2a-1} + o(1)
	\] 
	follows. Therefore,
	a substitution in \eqref{eq:mminusn} yields
	\be \label{eq:argh}
	m_n-n = N - N^{2a-1}+o(1). 
	\ee	

	\item {\bf Step 2:  $n$ as a function of $N$}:
	We begin by writing $n$ as a function of $N$. 
	 Observe that $N(n)$ in equation \eqref{eq:nmono} is an eventually monotone function. 
	 Therefore, for $N$ large enough, there is a well defined inverse $n = n(N)$ 
	 (so that $N(n(N)) = N$). 
	 We cannot directly use a closed formula for the inverse, so we define 
	 the approximate inverse $\ell(N)$ by 
	\[ 
	\ell(N) = \frac{p}{q}N + \frac{2\sqrt p}{q} N^a + y\left(\frac{q}{p}\right)^{\frac{1+a}{3}} N^{\frac{2-a}{3}}. 
	\]
	To see that $\ell(N)$ plays the role of the inverse $n(N)$, substitute $\ell(N)$ in \eqref{eq:nmono} and estimate using 
	a Taylor expansion the distance
	\be \label{eq:almostinverse}
	 |N-N(\ell(N))| =|N - \frac{q}{p}\ell(N) + x_{p,a}\ell(N)^a + y\ell(N)^{\frac{2-a}{3}}| = O(N^{2a-1}).
	\ee
	This implies that $|n(N) - \ell(N)| = o(N^{\frac{2-a}{3}})$; in fact we will show that =
	\be
	\label{eq:ordernl}
		| n(N) - \ell(N)| < c N^{\beta},
	\ee 
	for any $\beta \in (2a -1, \frac{2-a}{3})$.
	Assume for a contradiction that \eqref{eq:ordernl} does not hold for some $c>0$ and for some $\beta > 2a - 1$. Then \allowdisplaybreaks
	 \begin{align*}
	|N - N(\ell(N))| &=  |N(n(N)) -  N(\ell(N)) | \\
	&=\Big| \frac{q}{p}(n(N) - \ell(N)) - x(n(N)^a - \ell(N)^a) \\
	&\phantom{xxxxxxxxx}- y(n(N)^{\frac{2-a}{3}} - \ell(N)^{\frac{2-a}{3}}) + c_{n(N)}-c_{\ell(N)} ) \Big| \\
	&\ge \frac{q}{p}|n(N) - \ell(N)| -x|n(N)-\ell(N)|^a -|y||n(N)-\ell(N)|^{\frac{2-a}{3}}\\
	&\phantom{xxxxxxxxx}- |c_{n(N)} - c_{\ell(N)}| \\
	&\ge CN^{\beta} \text{  for some $C >0$ and $N$ large enough}. 
	 \end{align*}
	 This contradicts \eqref{eq:almostinverse} since $\beta > 2a-1$. 
	 In particular 
	 we have shown that
	\be \label{eq:ntoN}
	\lim_{N \to \infty} \frac{| n(N) - \ell(N)|}{N^{\frac{2-a}{3}}} = \lim_{N \to \infty} \frac{n(N) -  \frac{p}{q}N - \frac{2\sqrt p}{q} N^a - 
	y\left(\frac{q}{p}\right)^{\frac{1+a}{3}} N^{\frac{2-a}{3}} }{N^{\frac{2-a}{3}}} = 0,
	\ee
	and we may write 
	\be \label{eq:n = N}
	n = \frac{p}{q}N + \frac{2\sqrt p}{q} N^a + y\left(\frac{q}{p}\right)^{\frac{1+a}{3}} N^{\frac{2-a}{3}} + o(N^{\frac{2-a}{3}}) = \ell(N) + o(N^{\frac{2-a}{3}}).
	\ee
	\end{enumerate}
	
	To finish the proof we need to be a bit cautious with the integers parts. Define  $k_N$ to be 
	\[
	k_N = \fl{m_n} - n - \fl{N} + \fl{\fl{N}^{2a-1}}.
 	\]
	It follows from \eqref{eq:argh} that $k_N$ is bounded in $N$ (and $n$). Also set $N = \fl{N} + \e_N$.
	Substituting these in equation \eqref{eq:disteq} we compute 
	\allowdisplaybreaks
	\begin{align}
	\P\{ \bsz{\fl{m_n}}{n} &\le n - \fl{\fl{N}^{2a-1}}\} = \P\{  \cgs{\fl{\fl{N}^{2a-1}}}{\fl{N} + k_N} \le n +\fl{ N} -1 \}\notag \\
	&= \P\{   \cgs{\fl{\fl{N}^{2a-1}}}{\fl{N} + k_N} \le \ell(N) + N -1 + n -\ell(N) + \e_N\} \notag \\
	&= \P\Big\{   \cgs{\fl{\fl{N}^{2a-1}}}{\fl{N} + k_N} - \frac{1}{q}N - \frac{\sqrt p}{q} N^a \le y\left(\frac{q}{p}\right)^{\frac{1+a}{3}} N^{\frac{2-a}{3}}  -1 + n -\ell(N) + \e_N\Big\} \notag \\
	&= \P\Bigg\{ \frac{  \cgs{\fl{\fl{N}^{2a-1}}}{\fl{N} + k_N} - \frac{1}{q}\fl{N} - \frac{\sqrt p}{q} \fl{N}^a}{\frac{\sqrt{p}}{q}\fl{N}^{\frac{2-a}{3}}} \le s + o(1) \Bigg\}.\label{eq:argh2}
	\end{align}
The passage time in the probability above can be compared with $\cgs{\fl{N^{2a-1}}}{\fl{N}}$ and satisfies 
\[ \abs{ \cgs{\fl{\fl{N}^{2a-1}}}{\fl{N}} - \cgs{\fl{\fl{N}^{2a-1}}}{\fl{N} + k_N}} <  \sum_{i=0}^{\fl{\fl{N}^{2a-1}}} \sum_{j=-k_N}^{k_N}  \zeta_{i,\fl{N}+ j}.\]
Since $a < \nicefrac57$, the number of geometric random variables in the right-hand side of the inequality is of lower order than $N^{\frac{2-a}{3}}$ and when scaled by it, the double sum vanishes $\P$-a.s. This allows us to remove $k_N$ from \eqref{eq:argh2} and equation \eqref{eq:B-M-TW} now gives the result by taking $n\to\infty$.

	\section{Optimality regions in the alignment model}
	\label{sec:ORalignment}
	
	In this section we prove our results about the alignment model.
	Because of Lemma \ref{lem:cones} and \eqref{eq:shift} 
	it is enough to consider the case where $\alpha=0$. 
	
	 Now it is straight-forward to prove theorems \ref{thm:LPalignment} and \ref{thm:ORprobregions}.
 
 \subsection{Proof of Theorem \ref{thm:LPalignment}} 
 
 Restrict to the full measure set of environments so that Lemma \ref{lem:pis} is in effect. Fix one such environment and assume $n$ is large enough so that statements (1)-(3) of Lemma \ref{lem:pis} hold. Let 
 \[
 g^{(a)}(n) = 
 \begin{cases}
 \sqrt{n \log n}, \quad a \le 1/2,\\
 n^a, \quad a > 1/2. 
 \end{cases}
\]
 Path $\pi_S$ is admissible under any penalty $\beta$, therefore by re-arranging the terms in the inequality of Lemma \ref{lem:pis},
 \begin{align*}
 u_n(a) \cdot e_2 - \sum_{k=1}^{u_n(a)\cdot e_2} \om_{k,k}  - \beta  xn^a \ge n ( 1 + \beta - \beta \abs\abc ) -  \asz{\fl{n\abs\abc - xn^a}}n\beta. 
 \end{align*}
 Now divide both sides by $g^{(a)}(n)$ and take the $\varlimsup$ as $n\to \infty$ to obtain 
 \be
 \varlimsup_{n\to \infty} \frac{n ( 1 + \beta - \beta \abs\abc )-  \asz{\fl{n\abs\abc - xn^a}}n\beta}{g^{(a)}(n)} \le 
 \begin{cases} 
 \sqrt{c_1} - \frac{1}{\abs\abc}, & a \le 1/2,\\
 \frac{1}{\abs\abc(\abs\abc-1)} - \beta x, & a > 1/2.
 \end{cases}
 \ee
 Let $c_1 \searrow \frac{2}{(\abs\abc -1)^2}$ to obtain the upper bound in the theorem. 
 
 For the lower bound, recall that the maximum possible positive weight for $\asz{\fl{n\abs\abc - xn^a}}n\beta$ is $n$ and the smallest possible gap 
 penalty is $\beta (\fl{n\abs\abc - xn^a} - n)$. Therefore 
 \[
  \varliminf_{n\to \infty} \frac{n ( 1 + \beta - \beta \abs\abc )-  \asz{\fl{n\abs\abc - xn^a}}n\beta}{g^{(a)}(n)} \ge 
  \begin{cases}
  0, & a \le 1/2,\\
  - \beta x, & a > 1/2.
  \end{cases}
 \]
 This completes the proof. \qed
 
 \subsection{Proof of Theorem \ref{thm:ORprobregions}}
	 From the previous theorem, we have that for $\beta = 0$, for $\P$-a.e.$~\om$ and any $\e>0$, we can find an $N=N(\omega, \e)$ so that for all $n > N$ 
	 \[
	 n \ge  \asz{\fl{n\abs\abc - xn^a}}n0 \ge n - (C(x,\abs\abc)+\e) g^{(a)}(n).
	 \] 
	 From this equation we immediately obtain that 
	 \be\label{eq:0pen} 
	 z_0 \le n, \quad  y_0 \le 2(C(x,\abs\abc)+ \e)g^{(a)}(n) + \fl{\abs\abc n - xn^{a}} - n.
	 \ee
	 We briefly explain the upper bound for $y_0$. First, any maximal path will always take the minimum number of gaps, which is 
	 $\fl{\abs\abc n - xn^{a}} - n$. 
	 After that, it has to take the correct number of diagonal steps to gain weight equal to  $\asz{\fl{n\abs\abc - xn^a}}n0$. 
	 Now all the remaining steps can either be gaps or mismatches, so we obtain an upper bound if we assume the number of mismatches is zero. 
	 The bound then follows from \eqref{eq:path:steps}.
	 
	 Similarly, for $\beta= \beta_R$, since $\pi_S$ can be optimal under this penalty, Lemma \ref{lem:pis} implies 
	 \be\label{eq:Rpen}
	 z_R \ge n - u_n(a) \cdot e_2 + \sum_{k=1}^{u_n(a)\cdot e_2} \om_{k,k} \ge n - u_n(a) \cdot e_2, \quad\text{and }  y_R = \fl{ \abs\abc n - xn^{a}} -n. 
	 \ee
	 Combine equations \eqref{eq:0pen} and \eqref{eq:Rpen} to obtain for some uniform constant $C$
	 \[
	 z_0 -z_R + y_0 - y_R \le u_n(a) \cdot e_2 + 2(C(x, \abs\abc)+ \e)g^{(a)}(n) \le C g^{(a)}(n),
	 \] 
	 and the result follows from Lemma \ref{lem:future}. \qed

\subsection{Proof of Theorem \ref{thm:ORexpregions}}
 
Lemma \ref{lem:pis}-(3) implies that if $\bar \pi_S$ exits from the north boundary, 
 \begin{align}
	 \label{eq:NorthBdEx}
z_{\beta}(\bar \pi_S) \ge n -  u_n(a) \cdot e_2 + \sum_{k=1}^{u_n(a)\cdot e_2} \om_{k,k}\ge n -  u_n(a) \cdot e_2, \text{ for all $\beta > 0$}.
 \end{align}
Let $B_n$ denote the event \eqref{eq:NorthBdEx} and $D_n$ the event that $\bar \pi_S$ exits from the north boundary. Choose $ c_1 = c_2  = 12/(\abs\abc -1)^2$ in the definition of $u_n(a)$ in \eqref{eq:DefUn}. Then it follows from \eqref{eq:grr} and \eqref{eq:grr2}, using Lemma \ref{lem:MD}, that
$\bar \pi_S$ exits from the north boundary with probability at least $1 - c_0 (n^3 \log n )^{-1}$.
Now, since $z_0 \le n$,
\be\label{eq:b1} 
D_n\subseteq B_n \subseteq \{ z_0 - z_R \le u_n(a) \}. \ee
On the other hand, since $z_0 \ge n -  u_n(a) \cdot e_2$, equation  \eqref{eq:path:steps} implies that 
\[ y_0 \le 2 u_n(a)\cdot e_2 + \fl{\abs\abc n - xn^{a}} - n =  2 u_n(a)\cdot e_2 - y_R.\] 
Therefore 
\be \label{eq:b2} 
D_n\subseteq \{ y_0 - y_R \le 2u_n(a)\cdot e_2 \}. \ee

Combine equations \eqref{eq:b1} and \eqref{eq:b2} to deduce
\begin{align}\label{eq:b3}
 D_n&\subseteq \{ y_0 - y_R \le 2u_n(a)\cdot e_2 \}\cap\{ z_0 - z_R \le u_n(a)\cdot e_2\}
 \subseteq \{ z_0 - z_R + y_0 - y_R \le 3 u_n(a)\cdot e_2\}. \notag
\end{align}
Finally, use \eqref{eq:epiphany} to obtain that for all $n > N =N(a, x)$, 
\be\label{eq:b4}
D_n\subseteq \{ R_{m,n} \le  C (u_n(a)\cdot e_2)^{2/3} \}.
\ee
On the complement of $D_n$ we bound $R$ by $n$, by virtue of \eqref{eq:RnGbound}.
Then for $n$ large enough, 
\begin{align*}
\E(\alreg{\fl{n\abs{\abc} - xn^a}}{n}) &\le  \E(\alreg{\fl{n\abs{\abc} - xn^a}}{n}1\!\!1\{D_n\}) + n \P\{ D^c_n \} \\
&\le  \E(\alreg{\fl{n\abs{\abc} - xn^a}}{n}1\!\!1\{\alreg{\fl{n\abs{\abc} - xn^a}}{n} \le C (u_n(a)\cdot e_2)^{2/3}\}) + n \P\{ D^c_n \} \\
&\le \begin{cases} 
C(x, \abs\abc) ( n \log n )^{1/3} &\quad  a \le 1/2,\\
C(x, \abs\abc) n^{2a/3}, &\quad  a > 1/2.
\end{cases}
\end{align*}
This gives the result. \qed

\bibliographystyle{acm}

\end{document}